\theoremstyle{definition}
\newcommand{\mcalK}{\mathcal{K}}
\newcommand{\Lap}{L}
\newtheorem{theorem}{Theorem}
\newtheorem{lemma}[theorem]{Lemma}
\newtheorem{proposition}[theorem]{Proposition}
\newtheorem{corollary}[theorem]{Corollary}
\newtheorem{remark}[theorem]{Remark}
\begin{document}
\title{\bf Optical tomography on graphs}
\author[ukentucky]{Francis J. Chung}
\ead{fj.chung@uky.edu}
\author[umich_math]{Anna C. Gilbert}
\ead{annacg@umich.edu}
\author[umich_math]{Jeremy G. Hoskins \corref{cor1}}
\ead{jhoskin@umich.edu}
\author[umich_math,umich_phys]{John C. Schotland}
\ead{schotland@umich.edu}

\address[ukentucky]{Department of Mathematics, University of Kentucky, Lexington, KY 40506\\}
\address[umich_math]{Department of Mathematics, University of Michigan, Ann Arbor, MI 48109\\}
\address[umich_phys]{Department of Physics, University of Michigan, Ann Arbor, MI 48109\\}

\cortext[cor1]{Corresponding author}

\begin{abstract}

We present an algorithm for solving inverse problems on graphs analogous to those arising in diffuse optical tomography for continuous media. In particular, we formulate and analyze a discrete version of the inverse Born series, proving estimates characterizing the domain of convergence, approximation errors, and stability of our approach. We also present a modification which allows additional information on the structure of the potential to be incorporated, facilitating recovery for a broader class of problems.

\end{abstract}
\begin{keyword}
graph algorithms; graphs and groups; graphs and matrices; discrete mathematics in relation to computer science; equations of mathematical physics and other areas of application 
\newline {\it AMS:} 05C85, 05C25, 05C50, 68R, 35Q
\end{keyword}

\maketitle

\section{Introduction}
Inverse problems arise in numerous settings within discrete mathematics, including graph tomography~\cite{vardi_tom,Patch1999,tom_fin_graph,grun,CHUNG2001} and resistor networks~\cite{Curtis1994,morrow_90,morrow_91,morrow,ingerman,liouville,liliana_resistor}. In such problems, one is typically interested in reconstructing a function defined on edges of a fixed graph or, in some cases, the edges themselves. In this paper, we focus on recovering vertex properties of a graph from boundary measurements. The problem we consider is the discrete analog of optical tomography. Optical tomography is a
biomedical imaging modality that uses scattered light as a probe of structural variations in the optical properties of tissue~\cite{medic_imag}. The inverse problem of optical tomography consists of recovering the coefficients of a Schrodinger operator from boundary measurements.

Let $G=(V,E)$ be a finite locally connected loop-free graph with vertex boundary $\delta V$.  We consider the time-independent diffusion equation \cite{ip2008} 
\begin{eqnarray}
\label{diff_eq}
&({\Lap} u ) (x)  + \alpha_0 [1+\eta(x)] u(x) = f(x),  \quad x \in V , \\
&t \, u(x) + \partial u(x) = g(x),  \quad x \in \delta V ,
\label{bc}
\end{eqnarray}
which, in the continuous setting, describes the transport of the energy density of an optical field in an absorbing medium. Here we assume that the absorption of the medium is nearly constant, with small absorbing inhomogeneities represented by an {absorption coefficient}, or {\it vertex potential} $\eta.$ In place of the Laplace-Beltrami operator, we introduce the combinatorial Laplacian $\Lap$ defined by
\begin{equation}
(\Lap u)(x) = \sum_{y \sim x} \left[ u(x) - u(y) \right], 
\end{equation}
where $y \sim x$ if the vertices $x$ and $y$ are adjacent. We make use of the graph analog of Robin boundary conditions, where 
\begin{equation}
\partial u (x) = \sum_{\substack{y \in V \\ y \sim x}} \left[ u(x) - u(y)\right],
\end{equation}
and $t$ is an arbitrary nonnegative parameter, which interpolates between Dirichlet and Neumann boundary conditions. If the vertex potential $\eta$ is non-negative, then there exists a unique solution to the diffusion equation~(\ref{diff_eq}) satisfying the boundary
condition~(\ref{bc}),~[see \citenum{me_arxiv} and the references therein]. 

In \cite{me_arxiv} we presented an algorithm for solving the forward problem of determining $u,$ given $\eta.$ Our approach was a perturbative one, making use of known Green's functions for the time-independent diffusion equation (or Schr\"{o}dinger equation) \cite{part_dat,journ_func,vert,eig_path,disc_op,polyom,pert,cartier,soardi,inf_yamasaki}, with $\eta$ identically zero. The corresponding inverse problem, which we refer to as graph optical tomography, is to recover the potential $\eta$ from measurements of $u$ on the boundary of the graph. More precisely, let $G = (V,E)$ be a connected subgraph of a finite graph $\Gamma = (\mathcal{V}, \mathcal{E})$ and let $\delta V$ denote those vertices in $\mathcal{V}$ adjacent to a vertex in $V.$ In addition, let $S,R$ denote fixed subsets of $\delta V$. We will refer to elements of $S$ and $R$ as sources and receivers, respectively. For a fixed potential $\eta,$  source $s\in S$ and receiver $r \in R,$ let $u(r,s;\eta)$ be the solution to (\ref{diff_eq}) with vertex potential $\eta$ and boundary condition (\ref{bc}), where
\begin{equation}
g(x) = 
\begin{cases} 1 & x = s,\\
0 & x \neq s.
\end{cases}
\end{equation}
We define the Robin-to-Dirichlet map $\Lambda_\eta$ by
\begin{equation}
\Lambda_\eta(s,r) = u(r,s;\eta).
\end{equation}
The inverse problem is to recover $\eta$ from the Robin-to-Dirichlet map $\Lambda_\eta.$

Eqs.~(\ref{diff_eq}) and (\ref{bc}) also arise when considering the Schr\"{o}dinger equation on graphs and related inverse problems~\cite{oberlin,kyo,ando,liouville}. For circular planar graphs, or lattice graphs in two or more dimensions, these works outline an algorithm that can be used to recover the vertex potential. In particular, the first three employ special combinations of boundary sources which force the solution in the interior to be zero except on a small, controllable set of vertices. Using this approach, the potential at each vertex can be calculated. Then, starting at the boundary, the entire potential can be recovered. The resulting algorithm relies on the lattice structure of the graphs and is unstable for potentials with large support.

In this paper we present a reconstruction method for graph optical tomography that is based on inversion of the Born series solution to the forward problem~\cite{ip2008, vasquez, Markel:03,inv_brn_num,inv_brn_opt, inv_brn_calderon, inv_brn_scalar}. Using this approach, we show that it is possible to recover vertex potentials for a general class of graphs under certain smallness conditions on the boundary measurements. In addition, we obtain sufficient conditions under which the inverse Born series converges to the vertex potential. We also obtain a corresponding stability estimate, which is independent of the support of the potential. In numerical studies of the inverse Born series for large potentials or large graphs, where exact recovery is not guaranteed, we nevertheless find that good qualitative recovery of large scale features of the potential is possible. Moreover, our approach can be easily modified to incorporate additional information on the structure of the potential, improving both the speed and accuracy of the algorithm. As an application of this idea, we show how to determine the potential $\eta$ using data for multiple values of $\alpha_0$, assuming $\eta$ is independent of $\alpha_0.$ This allows us to apply our method to graphs whose structure makes exact potential recovery otherwise impossible.

The remainder of this paper is organized as follows. In Section \ref{sec:IB_o} we briefly review key results on the solvability of the forward problem and introduce the Born series. We obtain necessary conditions for the convergence of the inverse Born series depending on the measurement data and the graph. We also describe related stability and error estimates.
In Section \ref{sec:imp} we discuss the numerical implementation of the inverse series and present the results of numerical simulations. Finally, in Section \ref{sec:mult_freq} we extend our results to the case where measurements can be taken at multiple values of $\alpha_0$.

\section{Inverse Born series}\label{sec:IB_o}
\subsection{Forward Born series}
In this section we formulate the inverse Born series. We begin by reviewing some important properties of the Born series, based in part on \cite{me_arxiv, ip2008}.

We recall that the {\it background Green's function} \cite{me_arxiv} for (\ref{diff_eq}) is the matrix $G_0$ whose $i,j$th entry is the solution to (\ref{diff_eq}), with $\eta \equiv 0$, at the $i$th vertex for a unit source at the $j$th vertex. Under suitable restrictions this matrix can be used to construct the {\it Robin-to-Dirichlet map} $\Lambda_{\eta}$ giving the solution of (\ref{diff_eq}) on $R \subset \delta V$ to unit sources located in $S \subset \delta V.$ To write a compact expression for $\Lambda_\eta$ in terms of $G_0,$ let $D_{\eta}$ denote the matrix with entries given by
\begin{equation}\nonumber
\left(D_{\eta}\right)_{i,j} = \begin{cases}
\eta_i &\text{if } i = j,\\
0 & \text{else}.
\end{cases}
\end{equation}
Additionally, for any two sets $U,W \subset V \cup \delta V,$ let $G_0^{U;W}$ denote the submatrix of $G_0$ formed by taking the rows indexed by $U$ and the columns indexed by $W.$ For $\eta$ sufficiently small we may write the Robin-to-Dirichlet map as a Neumann series
\begin{equation}\label{eq:f_born_r2dmap}
\Lambda_\eta(s,r) = G_0(r,s) - \sum_{j=1}^\infty K_j (\eta,\cdots,\eta) \,(r,s),\quad  r \in R, \,s \in S,
\end{equation}
where $K_j : \ell^p(V^n) \rightarrow \ell^p(R \times S)$ is defined by
\begin{equation}
K_j(\eta_1,\cdots,\eta_j) \,(r,s)=(-\alpha_0)^j G_0^{r;V} D_{\eta_1} \,G_0^{V;V} \,D_{{\eta}_2} \cdots G_0^{V;V} D_{{\eta}_j} G_0^{V;s}.
\end{equation}
We refer to the series (\ref{eq:f_born_r2dmap}) as the {\it forward Born series}.

In order to establish the convergence and stability of (\ref{eq:f_born_r2dmap}), we seek appropriate bounds on the operators $K_j: \ell^p(V\times \cdots \times V) \rightarrow \ell^p (\delta V \times \delta V).$ Note that if $|V|$ and $|\delta V|$ are finite then all norms are equivalent. 

However, since we are interested in the rate of convergence of the inverse series it will prove useful to establish bounds for arbitrary $\ell_p$ norms.

\begin{proposition}\label{thm_fwd_brn_thm}
Let $p,q  \in [1,\infty]$ such that $1/p + 1/q =1$ and define the constants $\nu_p$ and $\mu_p$ by
\begin{equation}\label{eq:forward_constants_def}
\nu_p =\alpha_0 \|G_0^{R;V}\|_{\ell^q(V) \times \ell^p(R)} \|G_0^{V;S}\|_{\ell^q(V) \times \ell^p(S)},\quad {\rm and}\quad \mu_p = \alpha_0\, C_{G_0^{V;V},q},
\end{equation}
where
\begin{equation}
C_{G_0^{V;V},q} = {\rm max}_{v \in V} \|G^{V;v}_0\|_{\ell^q(V)}.
\end{equation}
 The forward Born series (\ref{eq:f_born_r2dmap}) converges if
\begin{equation}
\mu_p \|\eta\|_p <1.
\end{equation}
Moreover, the $N$-term truncation error has the following bound,
\begin{equation}
\left\| \Lambda_\eta -\left(G_0+ \sum_{j=N}^\infty K_j (\eta,\cdots,\eta)\right) \right\|_{\ell^p(R\times S)}\le \nu_p\|\eta\|_p^{N+1} \mu_p^{N}\frac{1}{1-\mu_p \|\eta\|_p}.
\end{equation}
\end{proposition}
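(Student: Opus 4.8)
The plan is to bound the multilinear operators $K_j$ in the relevant $\ell^p$ norm, then sum the resulting geometric series. First I would estimate a single factor of the form $G_0^{V;V} D_\eta$ acting on $\ell^q(V)$. Writing $D_\eta w$ coordinatewise as $\eta_i w_i$ and using H\"older's inequality, one gets $\|D_\eta w\|_{\ell^1} \le \|\eta\|_{\ell^p}\|w\|_{\ell^q}$; more to the point, for the iterated product we want to track how each $D_{\eta}$ followed by an application of $G_0^{V;V}$ contracts. The key observation is that $\|G_0^{V;V} D_\eta w\|_{\ell^q(V)} \le C_{G_0^{V;V},q}\,\|\eta\|_{\ell^p}\,\|w\|_{\ell^q(V)}$: expand the left side, apply the triangle inequality to pull the sum over the ``$D_\eta$ index'' outside, recognize each column norm $\|G_0^{V;v}\|_{\ell^q(V)}$, bound it by the max $C_{G_0^{V;V},q}$, and then use H\"older on the pairing $\sum_v |\eta_v|\,|w_v|$. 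This gives the per-factor contraction constant $\mu_p = \alpha_0 C_{G_0^{V;V},q}$ once the $(-\alpha_0)^j$ prefactor is distributed as one $\alpha_0$ per internal Green's function.

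Next I would handle the two outer factors separately, since they map into $\ell^p(R)$ and come from $\ell^q(S)$ respectively. The factor $G_0^{V;S}$ applied to the ``source'' data contributes $\|G_0^{V;S}\|_{\ell^q(V)\times\ell^p(S)}$ and the factor $G_0^{R;V}$ contributes $\|G_0^{R;V}\|_{\ell^q(V)\times\ell^p(R)}$, so together with the remaining $\alpha_0$ they produce the constant $\nu_p$. Chaining these estimates, each term of the series obeys
\begin{equation}\nonumber
\|K_j(\eta,\dots,\eta)\|_{\ell^p(R\times S)} \le \nu_p\,\mu_p^{\,j-1}\,\|\eta\|_p^{\,j},
\end{equation}
where the $j-1$ power of $\mu_p$ counts the internal $G_0^{V;V}$ blocks. (One should be slightly careful bookkeeping the $j=1$ case, where there is no internal block, and confirm the exponents match.)

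Summing over $j\ge 1$ then gives a geometric series in $\mu_p\|\eta\|_p$, which converges precisely when $\mu_p\|\eta\|_p<1$, establishing the first claim. For the truncation bound, I would sum the same estimate from $j=N$ to $\infty$: the tail is $\nu_p\sum_{j\ge N}\mu_p^{\,j-1}\|\eta\|_p^{\,j} = \nu_p\|\eta\|_p^{N}\mu_p^{N-1}\big/(1-\mu_p\|\eta\|_p)$. I would then reconcile this with the stated form $\nu_p\|\eta\|_p^{N+1}\mu_p^{N}/(1-\mu_p\|\eta\|_p)$, which corresponds to the tail starting at $j=N+1$ (consistent with the displayed partial sum retaining the $j=N$ term alongside $G_0$); the indexing should be pinned down so the geometric bookkeeping lines up exactly.

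I expect the main obstacle to be purely organizational rather than deep: correctly propagating the mixed $\ell^p/\ell^q$ structure through the alternating product of rectangular submatrices and diagonal matrices, so that each Green's function block is paired against the norm in which its operator norm is defined, and each $D_\eta$ is absorbed by a single H\"older pairing contributing exactly one factor of $\|\eta\|_p$. Once the per-factor estimate $\|G_0^{V;V}D_\eta\,\cdot\,\|_{\ell^q}\le \mu_p\alpha_0^{-1}\|\eta\|_p\|\cdot\|_{\ell^q}$ is established cleanly, the rest is a routine geometric summation.
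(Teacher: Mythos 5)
Your proposal is correct and follows essentially the same route as the paper: the per-factor estimate $\|G_0^{V;V}D_\eta w\|_{\ell^q}\le C_{G_0^{V;V},q}\|\eta\|_p\|w\|_q$ (triangle inequality over the diagonal index, H\"older pairing, max column norm) is exactly the content of the paper's Lemma and Corollary, which package the same computation as a single multilinear bound $|u^TD_{a_1}M_1\cdots D_{a_j}v|\le \|a_1\|_p\cdots\|a_j\|_p\,C^{j-1}\|u\|_q\|v\|_\infty$, and both yield $\|K_j\|\le\nu_p\mu_p^{j-1}\|\eta\|_p^j$ followed by geometric summation. Your reading of the truncation bound as the tail starting at $j=N+1$ is the correct interpretation of the (typo-laden) displayed estimate.
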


\begin{remark}
The bounds we obtain are similar to those found in the continuous setting \cite{ip2008}, though here we present a novel proof of $\ell^2$-boundedness and extend our results to include $p \in[1,2);$ a case not previously considered. 
\end{remark}

Before proving the proposition, we first establish the following useful identities.
\begin{lemma}
Let $M$ be an $n \times n$ matrix, and $D_{a},$ $D_{b}$ be $n \times n$ diagonal matrices with diagonal entries given by vectors $a$ and $b,$ respectively. Let $M(k)$ denote the $k$th row of $M,$ and 
\begin{equation}
C_{M,q} = \max_{k} \| M(k) \|_q,
\end{equation}
for $1\le q\le \infty.$ Then for any vectors $u^T$ and $v,$ and $p,q \in [1,\infty],$ such that $1/p +1/q = 1,$
\begin{equation}
|u^T D_{a} M D_{b} v| \le C_{M,q} \|u\|_q \|a\|_p \|b\|_p \| v\|_\infty.
\end{equation}
\end{lemma}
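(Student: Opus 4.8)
The plan is to expand the bilinear expression entrywise and then apply H\"older's inequality twice. Writing the quadratic form in coordinates,
\[
u^T D_{a} M D_{b} v = \sum_{i=1}^n \sum_{j=1}^n u_i\, a_i\, M_{ij}\, b_j\, v_j ,
\]
so that by the triangle inequality
\[
|u^T D_{a} M D_{b} v| \le \sum_{i=1}^n |u_i|\,|a_i| \sum_{j=1}^n |M_{ij}|\,|b_j|\,|v_j| .
\]

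First I would factor out the $\ell^\infty$ norm of $v$: since $|v_j|\le \|v\|_\infty$ for every $j$, the inner sum is at most $\|v\|_\infty \sum_j |M_{ij}|\,|b_j|$. Next, treating $i$ as fixed, I apply H\"older's inequality to the sum over $j$ with the conjugate exponents $q$ and $p$, obtaining $\sum_j |M_{ij}|\,|b_j| \le \|M(i)\|_q \|b\|_p \le C_{M,q}\|b\|_p$, where the last step uses the definition of $C_{M,q}$ as the largest $\ell^q$ norm among the rows of $M$. This yields $|u^T D_{a} M D_{b} v| \le C_{M,q}\,\|b\|_p\,\|v\|_\infty \sum_i |u_i|\,|a_i|$. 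A second application of H\"older, again with the pair $(q,p)$, gives $\sum_i |u_i|\,|a_i| \le \|u\|_q \|a\|_p$, and multiplying the three estimates together produces the claimed inequality.

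The argument is elementary, so I do not expect a genuine obstacle. The only points that warrant a word of care are the endpoint cases $p\in\{1,\infty\}$ (and the corresponding $q$), where H\"older's inequality and the row norm $C_{M,q}$ must be read with the usual conventions ($\|\cdot\|_\infty$ as the supremum, $\|\cdot\|_1$ as the sum), and the observation that all sums are finite, so that interchanging the order of summation and pulling constants out of sums is harmless. If anything is ``hard'' it is purely bookkeeping: keeping the two H\"older pairings consistent so that $M$ is measured in $\ell^q$ while $a$, $b$ are measured in $\ell^p$.
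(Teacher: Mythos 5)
Your proof is correct and is essentially the same as the paper's: both expand the bilinear form entrywise, pull out $\|v\|_\infty$, and apply H\"older's inequality with the conjugate pair $(q,p)$ once to the row sums (yielding $C_{M,q}\|b\|_p$) and once to the outer sum (yielding $\|u\|_q\|a\|_p$). The paper merely phrases the expansion using canonical basis vectors $e_k$, but the estimates are identical.
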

\begin{proof}
We begin by observing that if $e_k$ is the $k$th canonical basis vector, $a = \sum_{k} D_a e_k$ and
$I = \sum_{k} e_k e_k^T,$ where $I$ is the $n \times n$ identity matrix. Hence
\begin{equation}
\begin{split}
|u^T D_{a} M D_{b}v| & \le\left( \sum_{k} | u^T D_{a} e_k| \right) \max_{k} \left| M(k) D_{b} v \right|,\\
&\le \|u\|_q \|a\|_p \max_{k} \sum_{j} |M(k)\, D_{b}\, e_j| \max_j |{e_j^T v}|,\\
&\le \|u\|_q \|a\|_p \|b\|_p \max_k \|M(k)\|_q \max_j |{e_j^T v}|.
\end{split}
\end{equation}
\end{proof}
We can iterate the result of the Lemma to obtain the following corollary.

\begin{corollary}
Let $M_1,\cdots, M_{j-1}$ be $n \times n$ matrices and $D_{{a}_1}, \cdots, D_{{a}_j}$ be $n \times n$ diagonal matrices with diagonal elements given by the vectors $a_1, \cdots, a_j.$ If $M_i(k)$ and $C_{M_i,q}$ are defined as in the previous Lemma, then for all $u$ and $v,$
\begin{equation}
\left| u^ T D_{{a}_1} M_1 D_{{a}_2} \cdots M_{j-1} D_{{a}_j} v\right|\le \|a_1\|_p \cdots \|a_j\|_p\, C_{M_1,q} \cdots C_{M_{j-1},q}\|u\|_q \|v\|_\infty, 
\end{equation}
where once again $p,q \in[1,\infty]$ and $1/p+1/q = 1.$
\end{corollary}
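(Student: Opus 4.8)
The plan is to prove the corollary by induction on the number $j$ of diagonal factors, taking the Lemma as the base case $j=2$.

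For the inductive step, assume the estimate holds for every product involving $j-1$ diagonal matrices. Given the expression $u^T D_{a_1} M_1 D_{a_2} \cdots M_{j-1} D_{a_j} v$ with $j$ diagonal factors, set $w = M_{j-1} D_{a_j} v$, so that the expression equals $u^T D_{a_1} M_1 D_{a_2} \cdots M_{j-2} D_{a_{j-1}} w$, which has $j-1$ diagonal matrices and $j-2$ intermediate matrices. The only new ingredient needed is a sup-norm bound on $w$: its $k$th entry is $M_{j-1}(k)\, D_{a_j} v$, so Hölder's inequality — exactly the step used in the proof of the Lemma — gives $|w_k| \le \|M_{j-1}(k)\|_q \,\|D_{a_j} v\|_p \le \|M_{j-1}(k)\|_q \,\|a_j\|_p\, \|v\|_\infty$, and maximizing over $k$ yields $\|w\|_\infty \le C_{M_{j-1},q}\, \|a_j\|_p\, \|v\|_\infty$. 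Applying the inductive hypothesis to $u^T D_{a_1} M_1 \cdots M_{j-2} D_{a_{j-1}} w$ and substituting this bound produces precisely the product $\|a_1\|_p \cdots \|a_j\|_p\, C_{M_1,q}\cdots C_{M_{j-1},q}\, \|u\|_q \,\|v\|_\infty$. Equivalently, one may bypass the explicit induction and simply propagate the sup-norm estimate inward from the right: with $v_0 = v$ and $v_i = M_{j-i} D_{a_{j-i+1}} v_{i-1}$, the same Hölder step gives $\|v_i\|_\infty \le C_{M_{j-i},q}\|a_{j-i+1}\|_p \|v_{i-1}\|_\infty$, and a final application of Hölder to $|u^T D_{a_1} v_{j-1}| \le \|D_{a_1} u\|_1 \|v_{j-1}\|_\infty \le \|a_1\|_p \|u\|_q \|v_{j-1}\|_\infty$ assembles the constants in the stated form.

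I do not expect any genuine obstacle here: the argument is nothing more than the repeated use of Hölder's inequality already carried out in the Lemma. The one place that needs care is purely organizational — since the constant $C_{M_i,q}$ is built from the $\ell^q$ norms of the \emph{rows} of $M_i$, the factors must be stripped from the right-hand end, where each $M_i$ acts on a vector through its rows, and one should verify the final tally: exactly $j$ factors $\|a_m\|_p$, exactly $j-1$ factors $C_{M_m,q}$, and a single $\|u\|_q$ together with a single $\|v\|_\infty$.
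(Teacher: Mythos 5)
Your proof is correct and follows exactly the route the paper intends: the paper gives no explicit proof beyond the remark that the corollary follows by iterating the Lemma, and your induction (peeling factors off the right and propagating the $\ell^\infty$ bound $\|M_{j-i}D_{a_{j-i+1}}v_{i-1}\|_\infty \le C_{M_{j-i},q}\|a_{j-i+1}\|_p\|v_{i-1}\|_\infty$ via H\"older) is precisely that iteration, with the bookkeeping of $j$ factors $\|a_m\|_p$ and $j-1$ factors $C_{M_m,q}$ coming out right.
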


We now return to the proof of Proposition \ref{thm_fwd_brn_thm}.
\begin{proof}
 Since $D_{{\eta}_i}$ is a diagonal matrix, $D_{{\eta}_i} = \sum_{k \in V} \eta_i(k) e_k e_k^T,$ where $\eta_i(k)$ is the $k$th component of the vector $\eta_i$ and $e_k$ is the canonical basis vector corresponding to the vertex $k.$ From the definition of $K_j$, we see that
 \begin{equation}
\begin{split}
\|K_j(\eta_1,\cdots,\eta_n) \|_{p} &\le \alpha_0^j \left(\sum_{r \in R, \, s \in S}  \left[G_0^{r;V} D_{{\eta}_1} \,G_0^{V;V} \,D_{{\eta}_2} \cdots G_0^{V;V} D_{{\eta}_j} G_0^{V;s}\right]^p\right)^{1/p}.\\
\end{split}
\end{equation}
The previous Corollary implies that
 \begin{equation}
 \begin{split}
  \left|G_0^{r;V} \,D_{{\eta}_1} \cdots D_{{\eta}_j} G_0^{V;s} \right| &\le \|\eta_1\|_p \cdots \|\eta_{j}\|_p C_{G_0^{V;V},q}^{j-1} \|G_0^{V,r}\|_q \|G_0^{V,s}\|_\infty.\\
  \end{split}
 \end{equation}
Thus
  \begin{equation}
\begin{split}
\|K_j \|_{p} & \le \alpha_0^j \|G_0^{R,V}\|_{\ell^p(R) \times \ell^q(V)} \|G_0^{V,S}\|_{\ell^q(V) \times \ell^p(S)} C_{G_0^{V;V},q}^{j-1}\,,\\
&\le \nu_p \, \mu_p^{j-1},
\end{split}
\end{equation}
where $\nu_p =\alpha_0 \|G_0^{R,V}\|_{\ell^q(V) \times \ell^p(R)} \|G_0^{V,S}\|_{\ell^q(V) \times \ell^p(S)}$ and $\mu_p = \alpha_0\, C_{G_0^{V;V},q},$ from which the result follows immediately.

\end{proof}
 
\subsection{Inverse Born series}\label{subsec_inv_brn}

Proceeding as in \cite{ip2008}, let $\phi \in \ell^2 (R \times S)$ denote the {\it scattering data},
\begin{equation}
\phi(r,s) = G_0(r,s)-\Lambda_\eta(r,s),
\end{equation}
corresponding to the difference between the measurements in the background medium and those in the medium with the potential present. Note that if the forward Born series converges, we have
\begin{equation}
\label{born_again}
\phi(r,s) = \sum_{j=1}^\infty K_j(\eta,\dots,\eta).
\end{equation}
Next, we introduce the ansatz 
\begin{equation}
\label{eq_eta_ansatz}
\eta= \mcalK_1 (\phi) +\mcalK_2( \phi , \phi)+\mcalK_3( \phi , \phi, \phi) +\cdots \ ,
\end{equation}
where each $\mcalK_n$ is a multilinear operator. Though $\phi$ can be thought of as an operator from $\ell^2(R)$ to $\ell^2(S),$ in (\ref{eq_eta_ansatz}) we treat it as a vector of length $|R| \cdot |S|.$ Similarly, though it is often convenient to think of $\eta$ as a (diagonal) matrix, in (\ref{eq_eta_ansatz}) it should be thought of as a vector of length $|V|.$ Treating $\eta$ and $\phi$ as matrices results in a different inverse problem related to matrix completion~\cite{vadim_mark}. With a slight abuse of notation, we also use $K_1$ to denote the $|R||S| \times |V|$ matrix mapping $\eta$ (viewed as a vector) to $K_1\eta$, once again thought of as a vector.

To derive the inverse Born series, we substitute the ansatz (\ref{eq_eta_ansatz}) into the forward series (\ref{born_again}) and equate tensor powers of $\phi$. We thus obtain which the following recursive expressions for the operators $\mcalK_j$~\cite{ip2008}: 
\begin{equation}\label{eq:def_calK}
\begin{split}
\mcalK_1 &= K^+_1,\\
\mcalK_2 &= -\mcalK_1 K_2 \mcalK_1 \otimes \mcalK_1,\\
\mcalK_3 &= -\left(\mcalK_2 K_1 \otimes K_2+\mcalK_2 K_2 \otimes K_1+\mcalK_1 K_3 \right) \mcalK_1 \otimes \mcalK_1 \otimes \mcalK_1,\\
\mcalK_j &= -\left( \sum_{m=1}^{j-1} \mcalK_m \sum_{i_1+\cdots+i_m=j} K_{i_1}\otimes \cdots \otimes K_{i_m} \right) \mcalK_1 \otimes \cdots \otimes \mcalK_1,
\end{split}
\end{equation}
where $K_1^+$ denotes the (regularized) pseudoinverse of $K_1$.

The following result provides sufficient conditions for the convergence of the inverse Born series for graphs where $|V| = |R\times S|$, corresponding to the case of a formally determined inverse problem.

\begin{theorem}\label{thm_complex}
Let $|V| = |R\times S|$ and $p \in [1,\infty]$. Suppose that the operator $K_1$ is invertible. Then the inverse Born series converges to the original potential, $\eta,$ if $\|\phi\|_p <r_p$. Here the radius of convergence $r_p$ is defined by
\begin{equation}
\label{def_rp}
r_p = \frac{C_p}{\mu_p}\left[1-2\frac{\nu_p}{C_p}\left(\sqrt{1+\frac{C_p}{\nu_p}}-1 \right) \right] ,
\end{equation}
where
\begin{equation}
C_p = \min_{\|\eta\|_p=1} \|K_1(\eta)\|_p 
\end{equation}
and $\nu_p, \mu_p$ are defined in (\ref{eq:forward_constants_def}).	
\end{theorem}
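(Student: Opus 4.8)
The plan is to adapt the convergence argument for the continuous inverse Born series \cite{ip2008} to the discrete bounds of Proposition~\ref{thm_fwd_brn_thm}. Since $|V|=|R\times S|$ and $K_1$ is invertible, $\mcalK_1=K_1^{+}=K_1^{-1}$ and $\|\mcalK_1\|_p=1/C_p$. The first step is to rewrite the recursion~(\ref{eq:def_calK}) in the form obtained by substituting the forward series $\phi=\sum_i K_i(\eta,\dots,\eta)$ directly into $\mcalK_1\phi=\eta-\sum_{l\ge 2}\mcalK_l(\phi,\dots,\phi)$ and matching tensor powers of $\phi$; using $\mcalK_1K_1=I$ this yields, for $n\ge 2$,
\[
\mcalK_n=-\mcalK_1\sum_{j=2}^{n}K_j\sum_{\substack{n_1+\dots+n_j=n\\ n_i\ge 1}}\mcalK_{n_1}\otimes\cdots\otimes\mcalK_{n_j},
\]
which contains no factors of $K_1$ and is therefore well suited to norm estimates.

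I would then bound $\|\mcalK_n\|_p$ by induction on $n$. Using the multilinear operator-norm inequality $\|K_j(\mcalK_{n_1}\otimes\cdots\otimes\mcalK_{n_j})\|_p\le\|K_j\|_p\prod_i\|\mcalK_{n_i}\|_p$, the bound $\|K_j\|_p\le\nu_p\mu_p^{\,j-1}$ established in the proof of Proposition~\ref{thm_fwd_brn_thm}, and $\|\mcalK_1\|_p=1/C_p$, the recursion above gives $\|\mcalK_n\|_p\le M_n$, where $M_1=1/C_p$ and $M_n=\frac{1}{C_p}\sum_{j=2}^{n}\nu_p\mu_p^{\,j-1}\sum_{n_1+\dots+n_j=n}M_{n_1}\cdots M_{n_j}$ for $n\ge 2$. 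Summing, the generating function $W(x)=\sum_{n\ge 1}M_n x^n$ satisfies
\[
C_pW=x+\nu_p\,\frac{\mu_pW^2}{1-\mu_pW},
\qquad\text{i.e.}\qquad
(\nu_p+C_p)\mu_pW^2-(C_p+\mu_px)W+x=0,
\]
and the branch with $W(0)=0$ is analytic up to the first positive zero of the discriminant $(C_p+\mu_px)^2-4(\nu_p+C_p)\mu_px$. Solving this quadratic in $x$, the smallest positive root is precisely $r_p$ as given in~(\ref{def_rp}); hence $\sum_n M_n x^n$ has radius of convergence $r_p$, with a square-root branch point there.

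It follows that for $\|\phi\|_p<r_p$ one has $\sum_n\|\mcalK_n(\phi,\dots,\phi)\|_p\le\sum_n M_n\|\phi\|_p^{\,n}=W(\|\phi\|_p)<\infty$, so the inverse Born series converges absolutely. To identify its sum with $\eta$, substitute the convergent forward series $\phi=\sum_j K_j(\eta,\dots,\eta)$ into $\sum_n\mcalK_n(\phi,\dots,\phi)$: since the $\mcalK_n$ were defined exactly by equating powers of $\phi$, every contribution of order $\ge 2$ in $\eta$ cancels and only $\mcalK_1K_1\eta=\eta$ remains. The interchange of the two summations is justified because the doubly indexed series is dominated by $W\!\left(\nu_p\|\eta\|_p/(1-\mu_p\|\eta\|_p)\right)$, which is finite in the regime where $\|\phi\|_p<r_p$ and the forward series converges, exactly as in the continuous setting \cite{ip2008}.

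The main obstacle is the middle step. Establishing the $K_1$-free form of the recursion is what allows the induction to close on a single majorant sequence $M_n$ rather than a two-parameter one, and only then does the generating function satisfy a clean quadratic; the remaining work is to verify that the square-root branch point of this quadratic reproduces the closed-form expression~(\ref{def_rp}) for $r_p$. By comparison, the verification that the series converges to $\eta$ itself, rather than to some other fixed point, is routine once absolute convergence is in hand.
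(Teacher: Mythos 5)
Your argument is correct in its main thrust but takes a genuinely different route from the paper. The paper proves Theorem~\ref{thm_complex} with several-complex-variables machinery: it sets $F(\eta,\phi)=\phi-\sum_j K_j(\eta,\dots,\eta)$, uses the lower bound $\|F(\eta,0)\|_p\ge C_p\|\eta\|_p-\nu_p\mu_p\|\eta\|_p^2/(1-\mu_p\|\eta\|_p)$ together with a multidimensional Rouch\'{e} theorem to show $F(\cdot,\phi)$ has a unique zero $\psi(\phi)$ in a ball $\|\eta\|_p<R_\lambda$, invokes the analytic implicit function theorem to conclude $\psi$ is analytic on $\|\phi\|_p<r_{p,\lambda}$, and then optimizes over $\lambda$ to get~(\ref{def_rp}). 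You instead run a majorant/generating-function argument on the $K_1$-free form of the recursion (legitimate here since $K_1$ invertible makes the formal left and right inverses of the Born map coincide, so your recursion defines the same $\mcalK_n$ as~(\ref{eq:def_calK})), and it is a nontrivial and satisfying check that the branch point of your quadratic for $W$ lands exactly on the paper's $r_p$. Your route is more elementary and self-contained, yields absolute convergence together with explicit bounds $\|\mcalK_n\|_p\le M_n$ (which the paper has to rederive separately, and more crudely, for its truncation-error estimate in Theorem~\ref{thm_asymp_conv}); the paper's route additionally delivers uniqueness of the small solution and analyticity of $\phi\mapsto\psi(\phi)$, which is what powers the stability estimate and the $N$-term error bound in the subsequent results. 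One technical point you should make explicit: Pringsheim's theorem (positive coefficients) plus the observation that $\mu_p W<1$ along the branch (the pole $W=1/\mu_p$ would force $\nu_p=0$) is what guarantees the first singularity of $W$ on the positive axis is the branch point, so that $\sum M_n x^n$ indeed converges for all $x<r_p$.

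The one genuine soft spot is the final identification of the limit with the true potential. Your rearrangement of the double series is dominated by $W\bigl(\nu_p\|\eta\|_p/(1-\mu_p\|\eta\|_p)\bigr)$, which is finite only when $\nu_p\|\eta\|_p/(1-\mu_p\|\eta\|_p)<r_p$; this is a smallness condition on $\eta$, and it is \emph{not} implied by $\|\phi\|_p<r_p$, since $\nu_p\|\eta\|_p/(1-\mu_p\|\eta\|_p)$ is precisely an upper bound for $\|\phi\|_p$ and can exceed $r_p$ while $\|\phi\|_p$ does not. So as written you prove convergence to $\eta$ only for $\eta$ in this smaller regime; for the full claim you need an extra step, e.g.\ the paper's uniqueness-of-the-zero argument (the series converges to the unique solution of the forward equations in the ball $\|\eta\|_p<R_\lambda$, which equals the true potential provided the true potential lies in that ball), or an analytic-continuation argument from small $\eta$. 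To be fair, this restriction on $\eta$ is also implicit in the paper's own proof, but your write-up should not assert that "$\|\phi\|_p<r_p$ and convergence of the forward series" alone justify the interchange.
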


The proof of Theorem~\ref{thm_complex} requires the following multi-dimensional version of Rouch\'{e}'s theorem.
\begin{theorem}~[Theorem 2.5, \citealp{several_complex}]
\label{multi_rouche}
Let $D$ be a domain in $\mathbb{C}^n$ with a piecewise smooth boundary $\partial D$.
Suppose that $f,g:\mathbb{C}^n \rightarrow \mathbb{C}^n$ are holomorphic on $\bar{D}$. If for each point ${z} \in \partial D$ there is at least one index $j$, $j=1,\dots,n$, such that $|g_j({z})| < |f_j({z})|$, then $f({z})$ and $f({z})+g({z})$ have the same number of zeros in $D,$ counting multiplicity.
\end{theorem}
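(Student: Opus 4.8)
The plan is to prove the statement by a homotopy argument resting on the fact that, for a holomorphic map $F:\bar D\to\mathbb{C}^n$ with no zeros on $\partial D$, the number of zeros of $F$ in $D$ counted with multiplicity is a homotopy invariant of the normalized restriction $F/|F|$ to $\partial D$. First I would introduce the linear family $F_t = f + t g$ for $t\in[0,1]$, so that $F_0 = f$ and $F_1 = f+g$, and check that no $F_t$ vanishes anywhere on $\partial D$. Indeed, if $F_t(z)=0$ at some $z\in\partial D$, then $f_j(z) = -t\,g_j(z)$ for every $j$, whence $|f_j(z)| = t\,|g_j(z)| \le |g_j(z)|$ for all $j$ since $t\le 1$. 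This contradicts the hypothesis that at $z$ some index $j$ satisfies $|g_j(z)| < |f_j(z)|$. Hence each $F_t$ is nonvanishing on the compact set $\partial D$, and by continuity and compactness the family is bounded away from zero on $\partial D$ uniformly in $t$.

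Next I would establish that the zeros of each $F_t$ inside $D$ are isolated and finite in number, so that the multiplicity-weighted count is well defined. Because $F_t$ is nonzero on $\partial D$, the zero set $Z_t = \{F_t = 0\}\cap D$ is a compact analytic subset of the open set $D$. Any positive-dimensional irreducible component of $Z_t$ would be a compact connected analytic space on which each coordinate function $z_i$ is holomorphic, hence constant by the maximum principle, forcing that component to reduce to a single point; thus $Z_t$ is discrete, and compactness makes it finite. At each isolated zero the multiplicity is defined as the local intersection multiplicity, equivalently the dimension of the local quotient ring, which for a holomorphic map coincides with the local topological degree and is therefore a positive integer.

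The core step is the degree representation of the total zero count. I would express the number of zeros of $F_t$ in $D$, counted with multiplicity, as the topological degree $\deg\left(F_t/|F_t| : \partial D \to S^{2n-1}\right)$ of the normalized boundary map into the unit sphere $S^{2n-1}\subset\mathbb{C}^n\cong\mathbb{R}^{2n}$, equivalently as the Bochner--Martinelli integral $\tfrac{(n-1)!}{(2\pi i)^n}\int_{\partial D}\omega(F_t)$, where $\omega(F_t)$ is the pullback under $F_t$ of the Bochner--Martinelli kernel. The identification of this analytic/topological degree with the sum of the algebraic multiplicities is the substantive input from several complex variables, and it is the step I expect to be the main obstacle: it relies both on the positivity of local degrees for holomorphic maps and on the generalized argument principle in $\mathbb{C}^n$ (the higher-dimensional analog of counting zeros by a boundary integral).

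Finally I would invoke homotopy invariance of the degree. Since $F_t\neq 0$ on $\partial D$ for all $t\in[0,1]$ and $F_t$ depends continuously (indeed holomorphically) on $(z,t)$, the maps $F_t/|F_t|$ furnish a continuous homotopy $\partial D\to S^{2n-1}$, so their degrees coincide. As this degree equals the multiplicity-weighted number of zeros, an integer-valued quantity that is continuous in $t$, it must be constant on $[0,1]$. Comparing the values at $t=0$ and $t=1$ shows that $f$ and $f+g$ have the same number of zeros in $D$, counting multiplicity, which is exactly the assertion of the theorem.
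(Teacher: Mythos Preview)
The paper does not supply its own proof of this theorem; it is simply quoted from the literature (Theorem~2.5 of the cited reference on several complex variables) and then applied as a black box in the proof of Theorem~\ref{thm_complex}. There is therefore nothing in the paper to compare your argument against.

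That said, your proposal is a correct outline of the standard proof. The linear homotopy $F_t=f+tg$ together with the observation that $F_t(z)=0$ on $\partial D$ would force $|f_j(z)|\le |g_j(z)|$ for \emph{every} $j$, contradicting the hypothesis, is exactly the right reduction. The subsequent steps---finiteness and isolatedness of the zero set via compactness of analytic subsets, identification of the multiplicity-weighted zero count with the topological degree of $F_t/|F_t|:\partial D\to S^{2n-1}$ (equivalently, with a Bochner--Martinelli boundary integral), and homotopy invariance of that degree---constitute precisely the argument given in standard references such as the one cited. The only point worth flagging is that you correctly identify the substantive analytic input: the equality between the algebraic multiplicity sum and the boundary degree requires the generalized argument principle in $\mathbb{C}^n$, which is itself a nontrivial result and not something you could reprove from scratch here.
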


\begin{proof}[Proof of Theorem \ref{thm_complex}]
Put $n=|V| = |R\times S|$. Let $F:\mathbb{C}^n\times\mathbb{C}^n \rightarrow \mathbb{C}^n$ be the function defined by
\begin{equation}
F(\eta,\phi) = \phi - \sum_{j=1}^\infty K_j(\eta,\dots,\eta).
\end{equation}
Note that $F$ has $n$ components $F_1,\dots,F_n,$ each of which is well-defined and holomorphic for all $\phi$ if $\|\eta\|_p < 1/\mu_p.$ Let
\begin{equation}
C_p = \min_{\|\eta\|_p=1} \|K_1(\eta)\|_p,
\end{equation}
 which is non-zero for all $p$ since $K_1$ is invertible. Then
\begin{equation}
\begin{split}
\|F(\eta,0)\|_p &\ge C_p \|\eta\|_p -\sum_{j=2}^\infty \|K_j(\eta,\dots,\eta)\|_p,\\
& \ge C_p \|\eta\|_p - \nu_p \sum_{j=2}^\infty \mu_p^{j-1} \|\eta\|_p^j,\\
& \ge C_p \|\eta\|_p - \nu_p \mu_p \|\eta\|_p^2 \frac{1}{1-\mu_p\|\eta\|_p},
\end{split}
\end{equation}
where the second inequality follows from the bounds on the forward operators obtained in the proof of Proposition \ref{thm_fwd_brn_thm}. For $0<\|\eta\|_p < 1/\mu_p,$ $\|F(\eta,0)\|_p$ is non-vanishing if
\begin{equation}
\|\eta\|_p < \frac{1}{\mu_p} \frac{C_p}{C_p+\nu_p}.
\end{equation}
Suppose $\lambda \ge 1$. We then define
\begin{equation}
R_\lambda = \frac{1}{\mu_p} \frac{C_p}{C_p+\nu_p \lambda},
\end{equation}
and let $\Omega_{1,\lambda} = \{ \eta \in \mathbb{C}^n\,|\, \|\eta\|_p <R_\lambda\}.$ 

Next, we observe that $F(\eta,\phi)-F(\eta,0) = \phi$ and hence if
\begin{equation}\label{eq:conv_proof_cond}
\|\phi\|_p < \|F(\eta,0)\|_p,
\end{equation}
then
\begin{equation}
\|F(\eta,\phi)-F(\eta,0)\|_p < \|F(\eta,0)\|_p.
\end{equation}
Note that
\begin{equation}
\| F(\eta,0) \|_p \ge C_p \|\eta\|_p -\nu_p\mu_p \frac{\|\eta\|_p^2}{1-\mu_p\| \eta\|_p},
\end{equation}
and thus (\ref{eq:conv_proof_cond}) holds if
\begin{equation}
\|\phi\|_p < C_p \|\eta\|_p -\nu_p\mu_p \frac{\|\eta\|_p^2}{1-\mu_p\| \eta\|_p}.
\end{equation}
If $\eta \in \partial \Omega_{1,\lambda},$ (\ref{eq:conv_proof_cond}) holds if
\begin{equation}
\|\phi\|_p < R_\lambda C_p \left(1 - \frac{1}{\lambda} \right)\equiv r_{p,\lambda}.
\end{equation}
Defining $\Omega_{2,\lambda} = \{\phi \in \mathbb{C}^n\,|\, \|\phi\|_p < r_{p,\lambda}\},$ we note the following: for all $(\eta,\phi) \in \Omega_{1,\lambda}\times\Omega_{2,\lambda},$ $F(\eta,0) \neq \vec{0};$ and, for all $(\eta,\phi) \in \partial\Omega_{1,\lambda}\times\Omega_{2,\lambda},$ $\|F(\eta,\phi)-F(\eta,0)\|_p < \|F(\eta,0)\|_p.$ By Theorem \ref{multi_rouche}, $F(\eta,0)$ and $F(\eta,\phi)$ have the same number of zeroes counting multiplicity on $\Omega_{1,\lambda}\times\Omega_{2,\lambda}$, namely precisely one. Thus, for all $\phi \in \Omega_{2,\lambda}$  there exists a unique $\eta = \psi(\phi)$ such that $F(\psi(\phi),\phi) = 0.$ Since the unique zero must have multiplicity one,
\begin{equation}
{\rm det}\left( \{\partial_{\eta_j}F_i\,(\psi(\phi),\phi)\}_{i,j=1}^n\right) \neq 0.
\end{equation}
Consequently, by the analytic implicit function theorem~[Theorem 3.1.3, \citealp{sev_comp}], $\psi$ is analytic in a neighborhood of each $\phi \in \Omega_{2,\lambda}$, which is sufficient to prove that $\psi$ is analytic on all of $\Omega_{2,\lambda}$. Hence $\psi$ has a Taylor series converging absolutely for all $\phi \in \Omega_{2,\lambda}.$ By construction, the terms in this series must match those of the inverse Born series. It follows that the inverse Born series must also converge for all $\phi \in \Omega_{2,\lambda}.$ Optimizing over $\lambda \ge 1,$ the inverse Born series converges for all $\phi\in \mathbb{C}^n,$ such that
\begin{equation}\label{eq:phi_bound}
\|\phi\|_p < \frac{C_p}{\mu_p}\left[1-2\frac{\nu_p}{C_p}\left(\sqrt{1+\frac{C_p}{\nu_p}}-1 \right) \right],
\end{equation}
which completes the proof.
\end{proof}

\begin{remark}

We note that Theorem 6 is closely related to the problem of determining the domain of biholomorphy of a function of several complex variables, where the  radii of analyticity of the function and its inverse are referred to as {Bloch radii} or {Bloch constants} \cite{fixed-point,harris,bloch_chen}. In the context of nonlinear optimization a related result was obtained in \cite{rouche_one}, which also made use of Rouch\'{e}'s theorem.

\end{remark}

\begin{remark}
The bound constructed in Theorem \ref{thm_complex} is only a lower bound for the radius of convergence. In practice, the series converges well outside this range, as the example in the next section confirms. Additionally, if in the proof of Theorem \ref{thm_complex} we instead define $F(\eta,\phi)$ by
\begin{equation}
F(\eta,\phi) = \mathcal{K}_1\phi - \sum_{j=1}^\infty \mathcal{K}_1 K_j(\phi,\dots,\phi),
\end{equation}
then it can easily be shown that the inverse series converges if
\begin{equation}\label{eq:mathcalk1_bound}
\|\mathcal{K}_1 \phi\|_p <\frac{1}{\mu_p}\left[1-2\frac{\nu_p}{C_p}\left(\sqrt{1+\frac{C_p}{\nu_p}}-1 \right) \right].
\end{equation}
Though the right-hand side is slightly more complicated, it is often easily computed and gives a better bound.
\end{remark}

Figure \ref{fig:asymptotics} shows a plot of the bound on the radius of convergence,
$$r_p=\frac{C_p}{\mu_p}\left[1-2\frac{\nu_p}{C_p}\left(\sqrt{1+\frac{C_p}{\nu_p}}-1 \right) \right]$$
 for various values of $C_p/\nu_p.$ For large graphs we expect the determinant of $K_1$ to be small, corresponding to a small value of $C_p.$ In this regime we observe that the first term in the asymptotic expansion of (\ref{eq:phi_bound}) is
\begin{equation}\label{eq:asymp_r_p}
r_p = \frac{C_p^2}{4\nu_p\mu_p} + \mathrm{O}\left(C_p^3\right).
\end{equation}
 
\begin{figure}[t]
  \centering
    \includegraphics[width=0.6\textwidth]{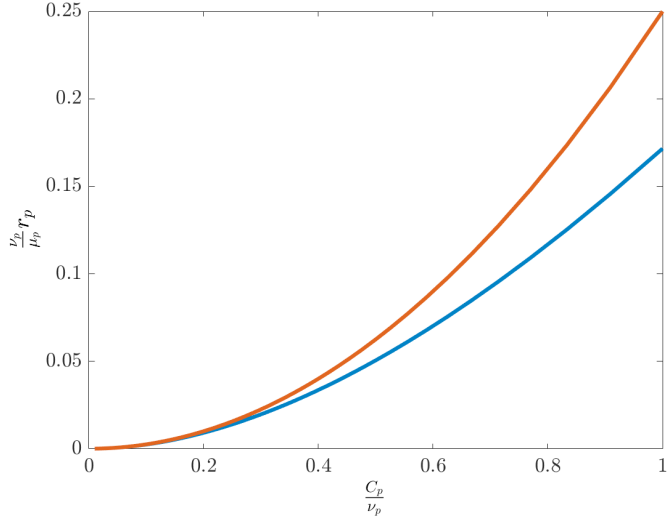}
      \caption{The bound on the radius of convergence of the inverse Born series as a function of $C_p/\nu_p.$ The radius $r_p$ (multiplied by $\nu_p/\mu_p$) is shown in blue. The red curve is the asymptotic estimate given in (\ref{eq:asymp_r_p}). } \label{fig:asymptotics}
\end{figure}

We now consider the stability of the limit of the inverse scattering series under perturbations in the scattering data. The following stability estimate follows immediately from Theorem \ref{thm_complex}.

\begin{proposition}
Let $D$ be a compact subset of 
$\Omega_p =  \left\{ \phi \in \mathbb{C}^n\,|\, \|\phi\|_p <r_p \right\},$
where $r_p$ is defined in (\ref{def_rp}) and $p\in [1,\infty]$. Let $\phi_1$
and $\phi_2$ be scattering data belonging to $D$ and $\psi_1$
and $\psi_2$ denote the corresponding limits of the inverse Born series.
Then the following stability estimate holds:
\begin{equation*}
\|\psi_1 - \psi_2\|_p \le M \| \phi_1 - \phi_2\|_p \ ,
\end{equation*}
where $M=M(D,p)$ is a constant which is otherwise independent of $\phi_1$ and $\phi_2$.
\end{proposition}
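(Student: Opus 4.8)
The plan is to deduce the stability estimate directly from the analyticity of $\psi$ established in the proof of Theorem \ref{thm_complex}. First I would recall that, by that theorem, the map $\psi$ is analytic on the open set $\Omega_p = \{\phi \in \mathbb{C}^n \,|\, \|\phi\|_p < r_p\}$, and in particular its Fr\'echet derivative $D\psi(\phi)$ exists and depends continuously on $\phi$ throughout $\Omega_p$. Since $D$ is a compact subset of the open set $\Omega_p$, the operator norm $\|D\psi(\phi)\|_{\ell^p \to \ell^p}$ attains a finite maximum over $D$; call this maximum $M_0 = M_0(D,p)$.

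The next step is to pass from the pointwise derivative bound to a Lipschitz bound along segments. The subtlety is that $D$ need not be convex, so the segment joining $\phi_1,\phi_2 \in D$ may leave $D$. To handle this I would enlarge $D$ slightly: since $D$ is compact and $\Omega_p$ is open, there is $\varepsilon > 0$ such that the closed $\varepsilon$-neighborhood $D_\varepsilon = \{\phi \,|\, \mathrm{dist}_p(\phi, D) \le \varepsilon\}$ is still contained in $\Omega_p$, and $D_\varepsilon$ is compact. Then set $M = M(D,p) = \max_{\phi \in D_\varepsilon} \|D\psi(\phi)\|_{\ell^p \to \ell^p}$, which is finite by continuity of $D\psi$ and compactness. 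For $\phi_1, \phi_2 \in D$ with $\|\phi_1 - \phi_2\|_p \le \varepsilon$, the entire segment $\{(1-t)\phi_1 + t\phi_2 : t \in [0,1]\}$ lies in $D_\varepsilon \subset \Omega_p$, so the fundamental theorem of calculus applied to $t \mapsto \psi((1-t)\phi_1 + t\phi_2)$ gives
\begin{equation*}
\|\psi_1 - \psi_2\|_p = \left\| \int_0^1 D\psi\big((1-t)\phi_1 + t\phi_2\big)(\phi_1 - \phi_2)\, dt \right\|_p \le M \|\phi_1 - \phi_2\|_p.
\end{equation*}
For $\phi_1, \phi_2 \in D$ with $\|\phi_1 - \phi_2\|_p > \varepsilon$, one instead uses the crude bound $\|\psi_1 - \psi_2\|_p \le 2 \sup_{\phi \in D} \|\psi(\phi)\|_p \le (2\varepsilon^{-1} \sup_{\phi \in D}\|\psi(\phi)\|_p)\, \|\phi_1 - \phi_2\|_p$, and enlarging $M$ to also dominate this constant gives the claimed estimate uniformly on $D$. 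Since $\psi(\Omega_p) = \Omega_{1}$ is bounded (all $\eta$ with $\|\eta\|_p < 1/\mu_p$), this supremum is finite.

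The main obstacle here is the nonconvexity of $D$, which is why the argument splits into a "nearby" case handled by the mean value inequality on the padded set $D_\varepsilon$ and a "far apart" case handled by a trivial diameter bound; both contribute to the final constant $M(D,p)$, which depends only on $D$ and $p$ and not on the particular points $\phi_1, \phi_2$. Everything else — existence and continuity of $D\psi$, finiteness of maxima over compacta — is immediate from Theorem \ref{thm_complex} and standard facts about analytic maps on $\mathbb{C}^n$, so the proof is short once the geometric bookkeeping is set up correctly.
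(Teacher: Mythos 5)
Your proof is correct and follows essentially the same route as the paper: analyticity of $\psi$ on $\Omega_p$ (from Theorem \ref{thm_complex}) gives a bound on the differential $D\psi$, and the mean value inequality along segments yields the Lipschitz estimate. You are in fact more careful than the paper, which asserts the derivative bound on all of $\Omega_p$ rather than on a compact subset; note also that your two-case split is avoidable, since $\Omega_p$ is a $p$-norm ball and hence convex, so the convex hull of $D$ is a compact subset of $\Omega_p$ containing every segment $\{(1-t)\phi_1+t\phi_2\}$, and taking $M=\max_{\phi\in\mathrm{conv}(D)}\|D\psi(\phi)\|_p$ finishes the argument in one step.
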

\begin{proof}
In the proof of Theorem \ref{thm_complex} it was shown that $\psi$ is analytic on $\Omega_p.$ In particular, it follows that there exists an $M<\infty$ such that 
\begin{equation}
\| D\psi \|_p \le M, 
\end{equation}
for all $\phi \in \Omega.$ Here $D \psi$ is the differential of $\psi$ and $\| \cdot\, \|_p$ is its induced matrix $p$-norm. By the mean value theorem,
\begin{equation}
\|\psi_1-\psi_2\|_p \le M \|\phi_1 - \phi_2\|_p,
\end{equation}
for all $\phi_1, \phi_2 \in \Omega.$
\end{proof}

Theorem~\ref{thm_complex} guarantees convergence of the inverse Born series, but does not provide an estimate of the approximation error. Such an estimate is provided in the next theorem.

\begin{theorem}
Suppose that the hypotheses of Theorem~\ref{thm_complex} hold and $\|\phi\|_p < \tau r_p,$ where $\tau <1$. If $\eta$ is the true vertex potential corresponding to the scattering data $\phi$, then
$$\left\| \eta - \sum_{m=1}^N K_m(\phi,\dots,\phi) \right\|_\infty <  M\left(\frac{1}{1-\tau}\right)^n \left(\frac{\|\phi\|_p}{\tau r_p} \right)^N \frac{1}{1-\frac{\|\phi\|_p}{\tau r_p}}.$$
\end{theorem}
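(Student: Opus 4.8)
The plan is to leverage the analyticity of the reconstruction map established inside the proof of Theorem~\ref{thm_complex}. There it was shown that $\eta=\psi(\phi)$, that $\psi$ is holomorphic on $\Omega_p=\{\phi\in\mathbb{C}^n:\|\phi\|_p<r_p\}$, and that the inverse Born series $\sum_{m\ge 1}\mcalK_m(\phi,\dots,\phi)$ is exactly the Taylor expansion of $\psi$ about the origin. Expanding the $i$th component in multi-index notation, $\psi_i(\zeta)=\sum_{\alpha}c_{i,\alpha}\,\zeta^{\alpha}$, the $N$th partial sum $\sum_{m=1}^{N}\mcalK_m(\phi,\dots,\phi)$ has $i$th entry $\sum_{|\alpha|\le N}c_{i,\alpha}\phi^{\alpha}$, so it suffices to bound $\max_i\bigl|\sum_{|\alpha|>N}c_{i,\alpha}\phi^{\alpha}\bigr|$ by multidimensional Cauchy estimates.

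First I would fix $\phi$ with $\rho:=\|\phi\|_p<\tau r_p$ (discarding any coordinates with $\phi_j=0$, which only decreases the dimension $n$ occurring below, so the stated bound still holds a fortiori) and work with the polydisc of polyradii $\sigma_j=|\phi_j|\,r_p/\rho$. Its distinguished boundary consists of points $\zeta$ with $\|\zeta\|_p=(\sum_j\sigma_j^{p})^{1/p}=r_p$, and its interior lies in $\Omega_p$; the Cauchy integral formula, applied on slightly contracted tori and passed to the limit, yields $|c_{i,\alpha}|\le M\big/\prod_j\sigma_j^{\alpha_j}$, where $M$ bounds $\|\psi\|_\infty$ on $\Omega_p$ (finite by the analyticity of $\psi$; cf.\ the preceding Proposition). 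The virtue of this choice of $\sigma_j$ is that $|\phi_j|/\sigma_j=\rho/r_p$ for every $j$, so that $|c_{i,\alpha}\phi^{\alpha}|\le M\,(\rho/r_p)^{|\alpha|}$ uniformly over all multi-indices of a given order.

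Next I would sum the tail. Since there are $\binom{m+n-1}{n-1}$ multi-indices $\alpha\in\mathbb{Z}_{\ge 0}^{n}$ with $|\alpha|=m$,
\begin{equation*}
\Bigl|\sum_{|\alpha|>N}c_{i,\alpha}\phi^{\alpha}\Bigr|\le M\sum_{m>N}\binom{m+n-1}{n-1}\Bigl(\frac{\rho}{r_p}\Bigr)^{m}.
\end{equation*}
Writing $(\rho/r_p)^{m}=\tau^{m}(\rho/(\tau r_p))^{m}$ and using the crude bound $\binom{m+n-1}{n-1}\tau^{m}\le\sum_{k\ge 0}\binom{k+n-1}{n-1}\tau^{k}=(1-\tau)^{-n}$, the right-hand side is at most
\begin{equation*}
M\,(1-\tau)^{-n}\sum_{m>N}\Bigl(\frac{\rho}{\tau r_p}\Bigr)^{m}=M\Bigl(\frac{1}{1-\tau}\Bigr)^{n}\Bigl(\frac{\rho}{\tau r_p}\Bigr)^{N+1}\frac{1}{1-\rho/(\tau r_p)},
\end{equation*}
which is dominated by the asserted quantity because $\rho/(\tau r_p)<1$. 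Taking the maximum over $i$ gives the $\ell^\infty$ estimate (in fact with exponent $N+1$, slightly sharper than stated).

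The step I expect to require the most care is the use of boundedness of $\psi$ on $\Omega_p$ in the Cauchy estimate: the adapted polydisc touches $\partial\Omega_p$, so a priori one only knows $\psi$ is holomorphic, not bounded, there. I would handle this by carrying out the estimate on the polydiscs of polyradii $(1-\delta)\sigma_j$, which are compactly contained in $\Omega_p$ and on which $\psi$ is bounded by a finite $M_\delta$; the computation above goes through with $\rho/r_p$ replaced by $\rho/((1-\delta)r_p)$, and since $\rho/(\tau r_p)<1$ one may choose $\delta$ small enough that all the geometric series remain summable, after which letting $\delta\downarrow 0$ recovers the stated inequality with a constant $M$ determined by $\psi$ on $\Omega_p$, as in the stability Proposition.
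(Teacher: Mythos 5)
Your proposal is correct and follows essentially the same route as the paper: analyticity of $\psi$ from Theorem~\ref{thm_complex}, Cauchy estimates on the polydisc adapted to $\phi$ with polyradii $|\phi_j|r_p/\|\phi\|_p$, and the combinatorial count of multi-indices of order $m$ to absorb the dimension into the factor $(1-\tau)^{-n}$. Your treatment of the two points the paper glosses over (vanishing coordinates of $\phi$, and boundedness of $\psi$ on the closed polydisc via contraction by $1-\delta$) is a welcome tightening, as is the sharper exponent $N+1$, but it is the same argument.
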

\begin{proof}
The proof follows a similar argument as the one used to show uniform convergence of analytic functions on polydiscs, see [Lemma 1.5.8 and Corollary 1.5.9, \citealp{sev_comp}] for example. By Theorem~\ref{thm_complex}, since $\|\phi \|_p < r_p$, the inverse Born series converges. Moreover, the value to which it converges is precisely the unique potential ${\eta}$ corresponding to the scattering data $\phi.$

Let $\psi_j$ be the $j$th component of the sum of the inverse Born series, which is of the form
\begin{equation}\label{eq:inv_born_psi}
\psi_j = \sum_{|\alpha|=0}^\infty c_\alpha^{(j)} \phi^\alpha ,
\end{equation}
for suitable $c_\alpha^{(j)}$, consistent with (\ref{eq_eta_ansatz}).
Here we have used the following notational convention: if $\alpha=(\alpha_1,\dots,\alpha_n)$ then $\phi^\alpha \equiv \phi_1^{\alpha_1}\dots \phi_n^{\alpha_n}.$ Additionally, for a given multi-index $\alpha$ we define $|\alpha| = \alpha_1+\dots+\alpha_n.$ Note that each $\alpha$ in the sum has exactly $n$ elements, though any number of them may be zero.

Let 
$$\psi_j^{(N)} =  \sum_{|\alpha|=0}^N c_\alpha^{(j)} \phi^\alpha,
$$
and $\Delta_\phi$ be the polydisc
$$\Delta_\phi = \left\{ z\in \mathbb{C}^n\,|\,|z_s| < |\phi_s|  \frac{r_p}{\|\phi\|_p}, \,s=1,\dots,n \right\}.$$
We note that $\phi \in \Delta_\phi \subseteq \{\phi\,|\, \|\phi\|_p < r_p\}.$ It follows by Cauchy's estimate~[Theorem 1.3.3, \citealp{sev_comp}] that
\begin{equation}
|c_\alpha^{(j)}| \le M \left(\frac{\|\phi\|_p}{r_p}\right)^{|\alpha|} \frac{1}{|\phi|^\alpha},
\end{equation}
where $M = \max_{\|\phi\|_p <r_p} \|\psi\|_p.$ To proceed, we employ the following combinatorial identity, ~[Example 1.5.7, \citealp{sev_comp}],
\begin{equation}
\sum_{|\alpha|=0}^\infty t^{|\alpha|} = \frac{1}{(1-t)^n} , 
\end{equation}
for all $ t\in (-1,1).$ In light of the above, we see that

$$M \sum_{|\alpha| = 0} \left(\frac{\|\phi\|_p}{r_p} \right)^{|\alpha|}= M \left(\sum_{m=1}^\infty \left(\frac{\|\phi\|_p}{r_p} \right)^{m}\right)^n= M \left(\frac{1}{1-\frac{\|\phi\|_p}{r_p} } \right)^n.$$
The function $1/(1-t)^n$ is bounded by
$$M\left(\frac{1}{1-\tau}\right)^n $$
for all $|t|<\tau<1.$ Thus the one-dimensional Cauchy estimate implies that the $k$th coefficient of its Taylor series, $b_k,$ is bounded by
$$|b_k| \le M\left(\frac{1}{1-\tau}\right)^n \frac{1}{\tau^k}, $$
and so
\begin{equation}
\begin{split}
\sum_{|\alpha|>N} \left(\frac{\|\phi\|_p}{r_p} \right)^{|\alpha|} &\le M\sum_{k>N}^\infty \left(\frac{1}{1-\tau}\right)^n \left(\frac{\|\phi\|_p}{\tau r_p} \right)^k,\\
& = M\left(\frac{1}{1-\tau}\right)^n \left(\frac{\|\phi\|_p}{\tau r_p} \right)^N \frac{1}{1-\frac{\|\phi\|_p}{\tau r_p}}.
\end{split}
\end{equation}
Hence, independent of $j,$
\begin{equation}
\begin{split}
\left|\psi_j - \psi_j^{(N)}\right| &= \left| \sum_{|\alpha|>{N}} c_\alpha^{(j)} \phi^\alpha \right|,\\
& \le \sum_{|\alpha|>{N}} |c_\alpha^{(j)}| |\phi|^\alpha,\\
& \le M \sum_{|\alpha|>N} \left( \frac{\|\phi\|_p}{r_p} \right)^{|\alpha|},\\
& \le M\left(\frac{1}{1-\tau}\right)^n \left(\frac{\|\phi\|_p}{\tau r_p} \right)^N \frac{1}{1-\frac{\|\phi\|_p}{\tau r_p}},
\end{split}
\end{equation}
from which the result follows immediately.
\end{proof}

\begin{remark}
Note that in the previous theorem we can minimize our bound over $\tau\in (\|\phi\|_p/r_p,1).$ Letting $\gamma = \|\phi\|_p/r_p$ the minimum occurs at
$$\tau_* = \frac{\gamma}{2}\left[\left(1+ \frac{N-\gamma}{\gamma(n+N)}\right)+\sqrt{\left(1-\frac{N-\gamma}{\gamma(N+n)}\right)^2+4\frac{1-\gamma}{\gamma(N+n)}}\right].$$
\end{remark}

Finally, we conclude our discussion of the convergence of the inverse Born series by proving an asymptotic estimate for the truncation error. Specifically, we show that for a fixed number of terms $N$ the error in the $N$-term inverse Born series goes to zero as $\eta$ goes to zero. We note that our estimate does not apply to the case of fixed $\phi$ and $N \rightarrow \infty$ since $C_{N,a} x^N \rightarrow \infty$ for any fixed positive $x.$

\begin{theorem}\label{thm_asymp_conv}
Let $\|\eta\|_p \mu_p <a<1$. Then there exists a constant $C_{N,a},$ depending on $N$ such that
\begin{equation}
\left\| \eta - \sum_{j=1}^N \mcalK_j(\phi,\cdots,\phi)\right\|_p \le C_{N,a}  \|\eta\|_p^{N+1}.
\end{equation}
\end{theorem}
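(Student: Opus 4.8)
The plan is to exploit the recursive structure of the operators $\mcalK_j$ together with the forward bounds $\|K_j\|_p \le \nu_p \mu_p^{j-1}$ from Proposition~\ref{thm_fwd_brn_thm}. The key observation is that, by construction, the inverse Born series is the formal inverse of the forward Born series; hence when one substitutes $\phi = \sum_{i\ge1} K_i(\eta,\dots,\eta)$ into $\sum_{j=1}^N \mcalK_j(\phi,\dots,\phi)$ and collects terms by homogeneity in $\eta$, all contributions of degree $1,2,\dots,N$ in $\eta$ telescope exactly to $\eta$ (this is precisely the defining property \eqref{eq:def_calK}), leaving only an error that is a sum of terms each homogeneous of degree $\ge N+1$ in $\eta$. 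So the first step is to write $\eta - \sum_{j=1}^N \mcalK_j(\phi,\dots,\phi)$ as a (formal) series $\sum_{k\ge N+1} P_k(\eta)$ where $P_k$ is degree-$k$ homogeneous, built from finitely many compositions and tensor products of the $\mcalK_m$ ($1\le m\le N$) and the $K_i$.

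The second step is to bound the operator norms of the $\mcalK_m$. From \eqref{eq:def_calK}, $\|\mcalK_1\|_p = \|K_1^+\|_p$, which is finite since $K_1$ is invertible; call it $\beta$. Then the recursion gives $\|\mcalK_j\|_p \le \beta^{j}\sum_{m=1}^{j-1}\|\mcalK_m\|_p \sum_{i_1+\cdots+i_m=j}\prod \|K_{i_\ell}\|_p$, and using $\|K_i\|_p \le \nu_p\mu_p^{i-1}$ one shows inductively that $\|\mcalK_j\|_p \le b_j$ for some sequence $b_j$ depending only on $\beta,\nu_p,\mu_p$; in fact the generating function $B(t)=\sum b_j t^j$ satisfies a functional equation of the form $B = \beta t + \beta B\bigl(\nu_p t/(1-\mu_p t)\bigr)\cdot(\text{geometric factor})$, so $B$ has positive radius of convergence. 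Combining with $\|\phi\|_p \le \sum_{i\ge1}\|K_i\|_p\|\eta\|_p^i \le \nu_p\|\eta\|_p/(1-\mu_p\|\eta\|_p)$, each term $P_k(\eta)$ in the error series has norm bounded by $C_k\|\eta\|_p^k$ with $C_k$ summable on the set $\mu_p\|\eta\|_p < a$ (shrinking $a$ if necessary so that the relevant composed generating functions converge). Factoring out $\|\eta\|_p^{N+1}$ from $\sum_{k\ge N+1}C_k\|\eta\|_p^k \le \|\eta\|_p^{N+1}\sum_{k\ge N+1}C_k a^{k-N-1}$ gives the constant $C_{N,a} := \sum_{k\ge N+1}C_k a^{k-N-1}$, which depends only on $N$ and $a$ (and the fixed graph constants), as claimed.

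The main obstacle is the bookkeeping in Step~1: making precise that the degree-$\le N$ part of the composition vanishes and that the remainder is genuinely a convergent series of homogeneous terms with controlled coefficients, rather than a merely formal manipulation. The cleanest route is to avoid expanding $P_k$ explicitly and instead argue at the level of the analytic map: since $\|\phi\|_p$ is small (controlled by $\|\eta\|_p$), we are inside the polydisc where, by the argument in the proof of Theorem~\ref{thm_complex} (or its remark), $\psi(\phi) = \sum_{j\ge1}\mcalK_j(\phi,\dots,\phi)$ is analytic and equals $\eta$; then $\eta - \sum_{j=1}^N\mcalK_j(\phi,\dots,\phi) = \sum_{j>N}\mcalK_j(\phi,\dots,\phi)$, and we bound this tail by $\sum_{j>N}b_j\|\phi\|_p^j \le \sum_{j>N}b_j\bigl(\nu_p\|\eta\|_p/(1-\mu_p\|\eta\|_p)\bigr)^j$. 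On $\mu_p\|\eta\|_p<a$ the quantity $\nu_p\|\eta\|_p/(1-\mu_p\|\eta\|_p)$ is $\le \nu_p\|\eta\|_p/(1-a)$, a constant multiple of $\|\eta\|_p$; pulling out $\|\eta\|_p^{N+1}$ and summing the (geometric-like, hence convergent for small enough $a$) remaining series yields $C_{N,a}\|\eta\|_p^{N+1}$. One must only check that the radius of convergence of $\sum b_j t^j$ exceeds $\nu_p a/(\mu_p(1-a))$ for $a$ close enough to the given value, or else simply absorb this by further restricting $a$; either way the statement as quantified (``there exists $C_{N,a}$'') is obtained.
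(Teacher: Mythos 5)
Your primary route (substitute the forward series for $\phi$ into the truncated inverse series, observe that all terms of homogeneity $\le N$ in $\eta$ cancel by the defining recursion \eqref{eq:def_calK}, and bound the remaining terms using $\|K_i\|_p\le\nu_p\mu_p^{i-1}$ together with a recursive bound on $\|\mcalK_m\|_p$ for $m\le N$) is exactly the paper's argument, and the bookkeeping you worry about is not actually an obstacle: the error is
$\sum_{j=1}^{N}\sum_{i_1+\cdots+i_j>N}\mcalK_j[K_{i_1}(\eta),\dots,K_{i_j}(\eta)]$,
which involves only the \emph{finitely many} operators $\mcalK_1,\dots,\mcalK_N$, so no generating function for the full sequence $(b_j)$ needs to converge; the inner sum over $i_1+\cdots+i_j=k>N$ contributes at most $\binom{k-1}{j-1}\nu_p^j\mu_p^{k-j}\|\eta\|_p^k$, and $\sum_{k>N}k^{N}(\mu_p\|\eta\|_p)^k$ converges for any $\mu_p\|\eta\|_p<a<1$ with a constant depending only on $N$ and $a$. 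In particular there is no need to ``shrink $a$'' (which would in any case weaken the statement, since $a$ is given).

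The fallback you recommend as the ``cleanest route'' is the one that fails, for two concrete reasons. First, writing $\eta-\sum_{j=1}^N\mcalK_j(\phi,\dots,\phi)=\sum_{j>N}\mcalK_j(\phi,\dots,\phi)$ presupposes that the full inverse Born series converges to $\eta$, which requires $\|\phi\|_p<r_p$; the hypothesis $\mu_p\|\eta\|_p<a<1$ only guarantees convergence of the \emph{forward} series and gives $\|\phi\|_p\le\nu_p\|\eta\|_p/(1-a)$, which need not lie inside $r_p$. Second, the tail bound $\sum_{j>N}b_j\|\phi\|_p^j$ requires $\sum_j b_jt^j$ to have positive radius of convergence, but the only bounds obtainable from the recursion \eqref{eq:def_calK} grow like $b_j\le\|\mcalK_1\|_p(2r)^j\bigl[\max\{1,r\}\bigr]^{j(j-1)/2}$ with $r=(\mu_p+\nu_p)\|\mcalK_1\|_p$; when $r>1$ this is super-geometric, so $\sum_jb_jt^j$ diverges for \emph{every} $t>0$ and no restriction of $a$ repairs this (the divergence is in $j$, not in $t$). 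Your claim that the generating function $B(t)$ satisfies a functional equation forcing a positive radius of convergence is unsubstantiated and, given these bounds, cannot be extracted from the recursion. Controlling the tail of the inverse series genuinely requires the Cauchy-estimate/polydisc argument of the preceding error-estimate theorem, which again needs $\|\phi\|_p<r_p$. Stick with your first route.
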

\begin{proof}
We begin by considering the truncated inverse Born series,
\begin{equation}\label{eq:trunceta}
\eta_N (\phi) = \sum_{j=1}^N \mcalK_j(\phi,\cdots,\phi),
\end{equation}
noting that
\begin{equation}
\eta_N - \eta = \sum_{j=1}^N \mcalK_j (\phi,\cdots,\phi) - \eta.
\end{equation}
If $\mu_p \|\eta\|_p < 1,$ $\phi$ is equal to its forward Born series, and hence
\begin{equation}
\eta_N - \eta = \sum_{j=1}^N\,\sum_{i_1,\cdots,i_j=1}^\infty \mcalK_j[K_{i_1}(\eta),\cdots,K_{i_N}(\eta)]-\eta.
\end{equation}
Using (\ref{eq:def_calK}) we find that
\begin{equation}
\eta_n - \eta = \sum_{j=1}^n\,\sum_{i_1+\cdots+i_j>n}^\infty \mcalK_j[K_{i_1}(\eta),\cdots,K_{i_N}(\eta)],
\end{equation}
which follows from the construction of the inverse Born series. Therefore
\begin{equation}\label{eq:inv_eta_bnd}
\begin{split}
\|\eta_N - \eta\|_p &\le \sum_{j=1}^N \|\mcalK_j\|_p\, \nu_p^j \sum_{k>N}^\infty \mu_p^{k-j} \|\eta\|_p^k,\\
&le \sum_{j=1}^N \|\mcalK_j\|_p\, \left(\frac{\nu_p}{\mu_p}\right)^j \sum_{k>N}^\infty \mu_p^{k} \|\eta\|_p^k,\\
&le \sum_{j=1}^N \|\mcalK_j\|_p\, \left(\frac{\nu_p}{\mu_p}\right)^j \|\eta\|_p^{N+1} \mu_p^{N+1} \sum_{k=0}^\infty (\mu_p \|\eta\|_p)^k,\\
& = \sum_{j=1}^N \|\mcalK_j \|_p\, \nu_p^j \mu_p^{N+1-j} \|\eta\|_p^{N+1} \frac{1}{1- \mu_p \|\eta\|_p}.
\end{split}
\end{equation}

In order to proceed, we require a bound on $\|\mcalK_j\|_p.$ As in \cite{ip2008}, we begin by observing that if $p \in[1,\infty],$ $j>2,$
\begin{equation}
\begin{split}
\|\mcalK_j\|_p &\le \|\mcalK_1 \|_p^j \sum_{m=1}^{j-1} \| \mcalK_m\|_p \sum_{i_1+\cdots+i_m=j} \|K_{i_1}\|_p \cdots \|K_{i_m}\|_p,\\
&\le \|\mcalK_1 \|_p^j \sum_{m=1}^{j-1} \| \mcalK_m\|_p \sum_{i_1+\cdots+i_m=j} \left(\frac{\nu_p}{\mu_p}\right)^m \mu_p^{j},\\
&= \|\mcalK_1 \|_p^j \sum_{m=1}^{j-1} \| \mcalK_m\|_p {{j-1}\choose{m-1}} \left(\frac{\nu_p}{\mu_p}\right)^m \mu_p^{j},\\
&\le \nu_p \mu_p^{j-1} \|\mcalK_1 \|_p^j \left(\sum_{m=1}^{j-1} \| \mcalK_m\|_p \right) \sum_{m=0}^{j-2} {{j-1}\choose{m}}\left(\frac{\nu_p}{\mu_p}\right)^m,\\
\end{split}
\end{equation}
where we have shifted the index $m$ in the last expression. It follows immediately from the binomial thoerem that
\begin{equation}
\begin{split}
\|\mcalK_j\|_p &\le \|\mcalK_1 \|_p^j \nu_p \left[\left( \mu_p + \nu_p\right)^{j-1}-\nu_p^{j-1} \right]\left(\sum_{m=1}^{j-1} \| \mcalK_m\|_p \right),\\
&\le \left[ \|\mcalK_1 \|_p \left( \mu_p + \nu_p\right) \right]^j\left(\sum_{m=1}^{j-1} \| \mcalK_m\|_p \right),\\
&\le \|\mcalK_1 \|_p \left( \mu_p + \nu_p\right) \|\mcalK_{j-1}\|_p+\frac{\nu_p}{\mu_p+\nu_p} \left[ \|\mcalK_1 \|_p \left( \mu_p + \nu_p\right) \right]^j \|\mcalK_{j-1}\|_p,\\
&\le \|\mcalK_1\|_p (\mu_p+\nu_p)\left[1+(\mu_p+\nu_p)^{j-1} \|\mcalK_1\|_p^{j-1}\right] \|\mcalK_{j-1}\|_p.
\end{split}
\end{equation}
Further note that if $j=2,$ then
\begin{equation}
\|\mcalK_2\|_p \le \|\mcalK_1\|_p^3 \nu_p^2 \le \| \mcalK_1\|_p^3 (\mu_p + \nu_p)^2.
\end{equation}
For ease of notation, let $r=(\mu_p+\nu_p) \|\mcalK_1\|_p$ and note that
\begin{equation}\label{eq:mcalK_bnd}
\begin{split}
\|\mcalK_j\|_p &\le \|\mcalK_1\|_p (\mu_p+\nu_p)\left[1+(\mu_p+\nu_p)^{j-1} \|\mcalK_1\|_p^{j-1}\right] \|\mcalK_{j-1}\|_p,\\ 
& \le r \, [1+r^{j-1}] \|\mcalK_{j-1} \|_p,\\
&\le \|\mcalK_1\|_p r^j 2^{j} \left[\max\{ 1,r \}\right]^\frac{j(j-1)}{2}.
\end{split}
\end{equation}
If we define $C = \max\{1,r\},$ then it follows from (\ref{eq:inv_eta_bnd}) and (\ref{eq:mcalK_bnd})
\begin{equation}
\begin{split}
\| \eta_N - \eta\|_p  &\le \frac{\|\eta\|_p^{N+1}\mu_p^{N+1}\|\mcalK_1\|_p}{1-\mu_p \|\eta\|_p} \sum_{j=1}^N \left( \frac{2\nu_p\,r}{\mu_p}\right)^j C^\frac{j^2}{2},\\
& \le \frac{\|\eta\|_p^{N+1}\mu_p^{N+1}\|\mcalK_1\|_p}{1-\mu_p \|\eta\|_p} \frac{1- \left(2\nu_p \mu_p^{-1}r \right)^{N+1}}{1- 2\nu_p \mu_p^{-1} r} C^\frac{N^2}{2},\\
&\le \tilde{C}(N)\frac{ \|\eta\|_p^{N+1}}{1-\mu_p \|\eta\|_p}.
\end{split}
\end{equation}
Thus, for $\|\eta\|_p< \mu_p^{-1} a< \mu_p^{-1},$ 
\begin{equation}
\|\eta_N-\eta\|_p \le C'(N) \|\eta\|_p^{N+1}
\end{equation}
for some constant $C'(N).$
\end{proof}

\section{Implementation}\label{sec:imp}
\subsection{Regularizing $\mcalK_1$}
In the previous section we found that the norm of $\mcalK_1$  plays an essential role in controlling the convergence of the inverse Born series. In practice, for large graphs $\|\mcalK_1\|_p$ is too large to guarantee convergence of the inverse series. Moreover, even if the series converges, a modest amount of noise can lead to large changes in the recovered potential. Regularization improves the stability and radius of convergence of the inverse Born series by replacing $\mcalK_1$ by a more stable operator $\tilde{\mcalK}_1$. In our implementations we use Tikhonov regularization~\cite{natterer}.

\subsection{Numerical examples}
We consider a $12 \times 12$ lattice, with all boundary vertices acting as sources and receivers. We simulate the scattering data by solving the forward problem for a particular vertex potential. As is often the case in biomedical applications, we consider a homogeneous medium with a small number of large inclusions. Figure~\ref{fig:ex_recon} shows a typical result of the inverse Born series method. In this example, $\mu_2 \sim 0.1738,$ $\nu_2\sim10.47,$ and $\|\mathcal{K}_1\|_2 \sim 2.9 \times10^{8},$ which gives a radius of convergence of $\|\mathcal{K}_1 \phi\|_2 < 4.5 \times10^{-9}$ using (\ref{eq:mathcalk1_bound}), or $\|\phi\|_2 < 1.51 \times10^{-16}$ for (\ref{eq:phi_bound}).
For $\|\eta\|_\infty = 0.1,$ $\|\phi\|_2 \sim 0.1853$ and $\|\mathcal{K}_1 \phi\|_2 \sim 16.2522,$ while for $\|\eta\|_\infty = 0.5,$ $\|\phi\|_2 \sim 0.9012$ and $\|\mathcal{K}_1 \phi\|_2 \sim 393.$ Therefore, though both of these $\phi$'s lie outside our bound, the first example shows signs of convergence, while the latter one appears to diverge. 

\renewcommand{\thesubfigure}{\roman{subfigure}}
\begin{figure}
    \centering
    ~
    \begin{subfigure}[b]{0.6\textwidth}
        \includegraphics[width=\textwidth,height = 8.0 cm]{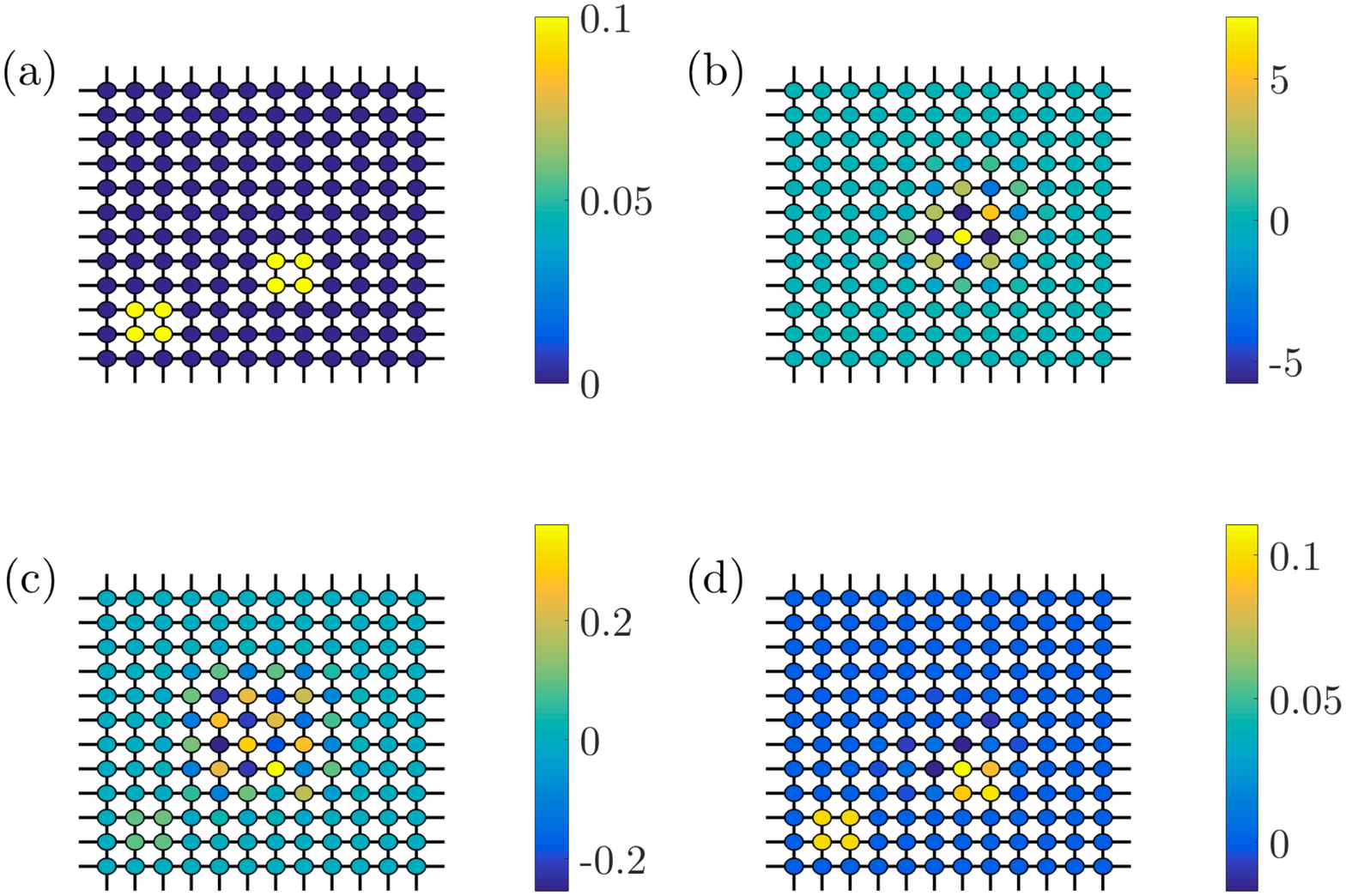}
        \caption{}
    \end{subfigure}\newline
      ~ 
    \begin{subfigure}[b]{0.7\textwidth}
        \includegraphics[width=\textwidth]{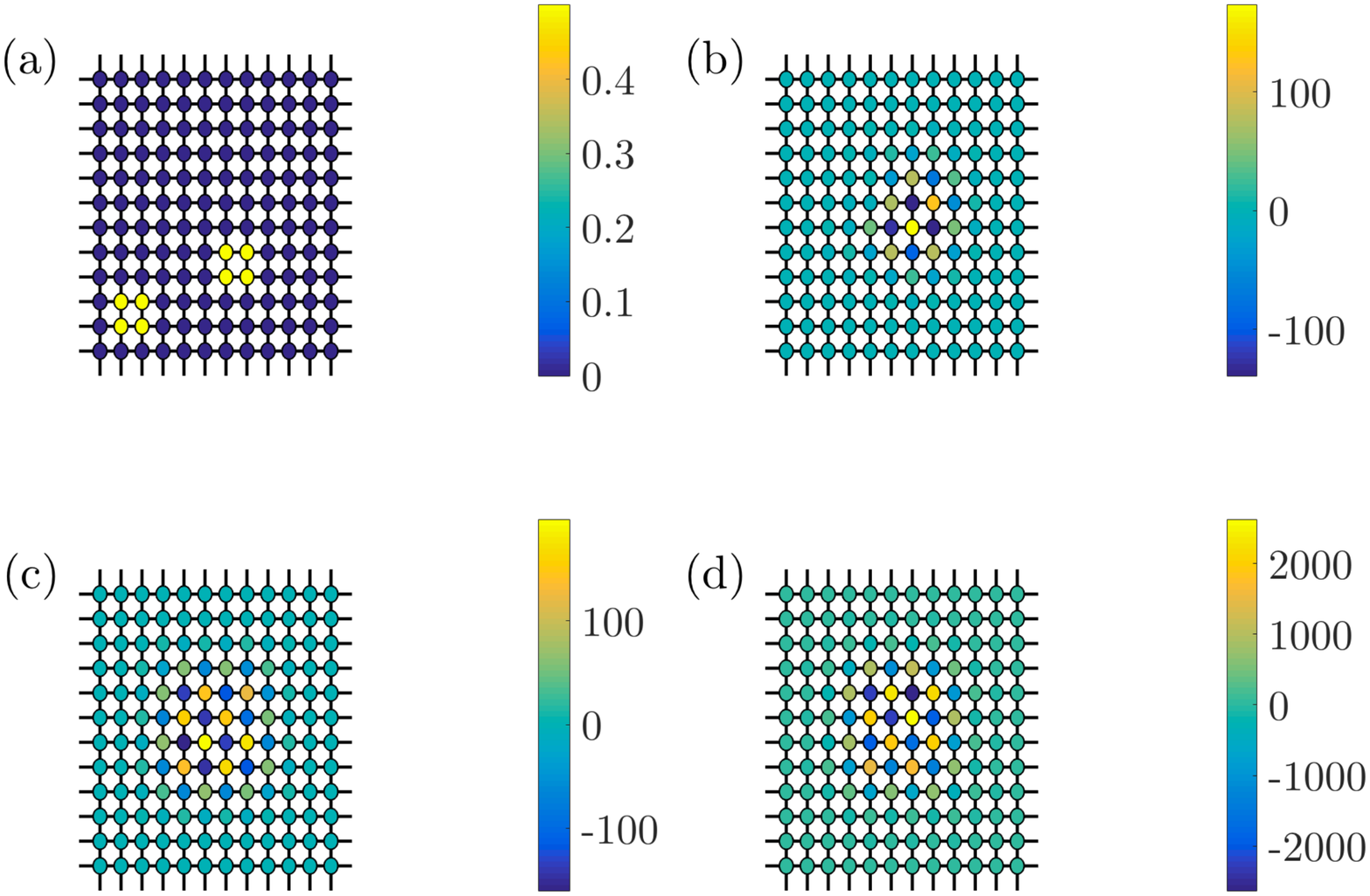}
        \caption{}
    \end{subfigure}
    \caption{Inverse Born numerical experiments for a $12 \times 12$ lattice with $\alpha_0 = 0.1,$ $t = 0$ and receivers and sources placed at each vertex on the boundary. i) $\|\eta\|_\infty = 0.1,$ and ii) $\|\eta\|_\infty = 0.5.$ In each group, a) is the absorption, $\eta,$ used to generate measurement data; b) is first term of the inverse Born series; c) is the first two terms of the inverse Born series; and d) is the first 5 terms of the inverse Born series.}\label{fig:ex_recon}
\end{figure}

\section{Incorporating potential structure}\label{sec:mult_freq}
The inverse Born series algorithm can be extended to take into account additional constraints on the vertex potential $\eta,$ such as restrictions on its support or requirements that it is constant on some subset of the domain, allowing the recovery of vertex potentials which would otherwise be unrecoverable using the inverse Born series described above.

\begin{theorem}\label{thm_mod_inv_brn}
Let $F$ be a linear mapping from $\mathbb{R}^k \rightarrow \mathbb{R}^{|V|},$ where $k \le |V|$ and suppose that $\eta$ is in the image of $F.$ Let $\eta'$ be its pre-image,
\begin{equation}\label{eq:mod_F_def}
\eta' = F^{-1} (\eta). 
\end{equation}a
Then
\begin{equation}
\eta' = \mcalK'_1 (\phi)+\mcalK'_2 \,(\phi ,\phi)+ \dots + \mcalK'_n (\phi ,\dots,\phi )+ \dots,
\end{equation}
and
\begin{equation}\label{eq:mod_inv_op_n}
\begin{split}
\mcalK'_1 &= (K_1\circ F)^+,\\
\mcalK'_2 &=-\mcalK'_1 \circ K_2 \circ \left( (F \circ \mcalK_1) \otimes (F \circ \mcalK_1)\right),\\
&\dots\\
\mcalK_n'&=-\sum_{j=1}^{n-1} \mcalK'_j \circ \left(\sum_{i_1+\dots i_j = n} K_{i_1}\otimes K_{i_2} \otimes \dots \otimes K_{i_j} \right)\circ\left( (F\circ \mcalK_1) \otimes \dots \otimes (F\circ \mcalK_1) \right),
\end{split}
\end{equation}
where $(K_1 \circ F)^+$ denotes the (regularized) pseudoinverse of $(K_1 \circ F).$
\end{theorem}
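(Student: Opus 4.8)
The plan is to mimic the derivation of the original inverse Born series given in~\eqref{eq:def_calK}, but with the substitution $\eta = F\eta'$ made from the outset. Since $F$ is linear and $\eta$ lies in its image, we may write $\eta = F(\eta')$ and substitute this into the forward Born series~\eqref{born_again}, obtaining
\begin{equation*}
\phi = \sum_{j=1}^\infty K_j(F\eta',\dots,F\eta') = \sum_{j=1}^\infty (K_j \circ (F\otimes\cdots\otimes F))(\eta',\dots,\eta').
\end{equation*}
In other words, $\phi$ is the value of a new forward Born series in the unknown $\eta'$, whose $j$th multilinear operator is $\widetilde{K}_j := K_j\circ(F^{\otimes j})$, with $\widetilde{K}_1 = K_1\circ F$. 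First I would posit the ansatz $\eta' = \sum_n \mcalK'_n(\phi,\dots,\phi)$ exactly as in~\eqref{eq_eta_ansatz}, substitute it into the displayed relation above, and equate tensor powers of $\phi$. This is formally identical to the computation that produced~\eqref{eq:def_calK}, with every occurrence of $K_j$ (for $j\ge 2$) replaced by $\widetilde{K}_j$ and every occurrence of $K_1$ replaced by $\widetilde{K}_1 = K_1\circ F$. Hence $\mcalK'_1 = \widetilde{K}_1^+ = (K_1\circ F)^+$ and, for $n\ge 2$,
\begin{equation*}
\mcalK'_n = -\left(\sum_{m=1}^{n-1}\mcalK'_m\!\!\sum_{i_1+\cdots+i_m=n}\!\!\widetilde{K}_{i_1}\otimes\cdots\otimes\widetilde{K}_{i_m}\right)\mcalK'_1\otimes\cdots\otimes\mcalK'_1.
\end{equation*}

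The remaining work is to rewrite this recursion in the form stated in~\eqref{eq:mod_inv_op_n}, which amounts to two bookkeeping observations. First, $\widetilde{K}_{i_1}\otimes\cdots\otimes\widetilde{K}_{i_m} = (K_{i_1}\otimes\cdots\otimes K_{i_m})\circ F^{\otimes n}$, because $i_1+\cdots+i_m = n$ means the total number of input slots is $n$ and each slot carries one factor of $F$; this lets me pull all the copies of $F$ to the right of the $K$'s. Second, I need to identify the composition $F^{\otimes n}\circ(\mcalK'_1\otimes\cdots\otimes\mcalK'_1)$ with $(F\circ\mcalK_1)\otimes\cdots\otimes(F\circ\mcalK_1)$ — but note the statement has $\mcalK_1$, the \emph{original} first inverse operator, not $\mcalK'_1$. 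Here one uses that $F\circ\mcalK'_1 = F\circ(K_1\circ F)^+$; when $K_1\circ F$ has full column rank (the relevant regularized case), $F(K_1 F)^+$ is the operator sending $\phi\mapsto$ the minimal-$\|\eta'\|$ preimage pushed forward by $F$, and a short linear-algebra check shows this equals the orthogonal-projection-composed-with-$\mcalK_1$ map; in the idealized invertible setting $F\circ\mcalK'_1 = F\circ(K_1F)^{-1}$ and $K_1\circ(F\circ\mcalK'_1) = \mathrm{Id}$, matching $K_1\circ\mcalK_1 = \mathrm{Id}$. I would present this as: after substituting $\eta = F\eta'$ one may equally well first run the \emph{unmodified} inverse series to get the leading reconstruction $F\mcalK_1\phi$ of $\eta$, so the natural "inner" operators appearing are $F\circ\mcalK_1$, and only the outermost pseudoinverse changes to account for the constraint, giving precisely~\eqref{eq:mod_inv_op_n}.

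Concretely the key steps, in order, are: (1) substitute $\eta=F\eta'$ into~\eqref{born_again} and read off the modified forward operators $\widetilde K_j = K_j\circ F^{\otimes j}$; (2) insert the ansatz for $\eta'$ and equate powers of $\phi$, obtaining the $\mcalK'_n$ recursion verbatim from~\eqref{eq:def_calK} with $K_j\rightsquigarrow\widetilde K_j$; (3) factor out $F^{\otimes n}$ from the tensor products using $i_1+\cdots+i_m=n$; (4) absorb the trailing $F^{\otimes n}\circ(\mcalK'_1)^{\otimes n}$ into $(F\circ\mcalK_1)^{\otimes n}$ via the identity $F\circ\mcalK'_1 = F\circ\mcalK_1$ on the range of $K_1\circ F$ (equivalently $K_1\circ F\circ\mcalK'_1 = $ identity on that range); and (5) verify the base cases $n=1,2$ by hand against the stated formulas. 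I expect step (4) to be the main obstacle: identifying $F\circ(K_1\circ F)^+$ with $F\circ\mcalK_1$ requires care about pseudoinverses versus genuine inverses, and is only literally true when $K_1\circ F$ has trivial kernel (or after regularization, in the appropriate approximate sense). The honest statement is that the recursion~\eqref{eq:mod_inv_op_n} is the one obtained by formal power-series inversion, and it reconstructs $\eta'$ exactly whenever $K_1\circ F$ is invertible — a hypothesis implicit in writing $(K_1\circ F)^+$ and parallel to the invertibility assumption on $K_1$ in Theorem~\ref{thm_complex}. Everything else is the same algebra that underlies~\eqref{eq:def_calK}, and in particular no new analytic estimate is needed for the identity itself; convergence would follow by applying Theorem~\ref{thm_complex} to the forward series with operators $\widetilde K_j$, whose constants $\widetilde\nu_p,\widetilde\mu_p,\widetilde C_p$ are controlled in terms of those of $K_j$ and the norm of $F$.
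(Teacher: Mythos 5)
Your proposal is correct and follows essentially the same route as the paper: substitute $\eta = F(\eta')$ to obtain the modified forward operators $K_j\circ F^{\otimes j}$ (the paper packages this step as a modified diffusion system with potential $D_{F(\eta')}$ whose Robin-to-Dirichlet map coincides with $\Lambda_\eta$), then insert the ansatz for $\eta'$ and equate tensor powers of $\phi$ exactly as in the derivation of (\ref{eq:def_calK}). Your concern in step (4) is well founded but harmless: the paper's own intermediate relations produce trailing factors of $F\circ\mcalK'_1=F\circ(K_1\circ F)^+$, and the $F\circ\mcalK_1$ appearing in (\ref{eq:mod_inv_op_n}) should be read via precisely the identification you make (the two operators agree on the range of $K_1\circ F$, where the data lives to leading order), so no new idea is required beyond the formal power-series inversion.
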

\begin{proof}
We begin by rewriting the discrete time-independent diffusion equation as
\begin{equation}\label{eq:sys_mod_1}
\begin{split}
L u + \alpha_0 [I+ D_{F(\eta')}] u -A_{V,\delta V}^Tv = \bf{0},\\
-A_{V,\delta V}u+Dv = \bf{g},
\end{split}
\end{equation}
where $D_{{F}(\eta')}$ is the diagonal matrix whose diagonal elements are given by the vector $F(\eta').$ If $\Lambda'_{\eta'}:\ell^2(\mathbb{R}^k) \rightarrow \ell^2(R\times S)$ denotes the Robin-to-Dirichlet map for the modified system (\ref{eq:sys_mod_1}) and $\eta$ is in the image of $F,$ then
\begin{equation}
 \Lambda'_{\eta'} = \Lambda_{\eta}.
\end{equation}
Thus the forward Born series of (\ref{eq:sys_mod_1}) is given by
\begin{equation}
\Lambda'_{\eta'}(s,r) = G_0(r,s) - \sum_{n=1}^\infty K_n \left(F(\eta'),\dots,F(\eta') \right).
\end{equation}
Following the construction of the inverse Born series, we let $\phi$ represent the measured data, and consider the ansatz
\begin{equation}
\eta' = \mcalK'_1( \phi)+\mcalK'_2 \,(\phi , \phi)+ \dots + \mcalK'_n (\phi, \dots, \phi) + \dots
\end{equation}
We see immediately that
\begin{equation}
\begin{split}
&\mcalK'_1 \circ K_1 \circ F = I,\\
&\mcalK'_1 \circ K_2\circ( F \otimes F)+\mcalK'_2 \circ \left((K_1\circ F) \otimes (K_1 \circ F)\right) = 0,\\
&\dots\\
&\sum_{j=1}^{n} \mcalK'_j \circ \left(\sum_{i_1+\dots i_j = n} K_{i_1}\otimes K_{i_2} \otimes \dots \otimes K_{i_j} \right)\circ\left( F \otimes \dots \otimes F \right)=0.
\end{split}
\end{equation}
If  $(K_1 \circ F)^+$ denotes the (regularized) pseudoinverse of $(K_1 \circ F),$ then we obtain
\begin{equation}\label{eq:mod_inv_op}
\begin{split}
\mcalK'_1 &= (K_1\circ F)^+,\\
\mcalK'_2 &=-\mcalK'_1 \circ K_2 \circ \left( (F \circ \mcalK_1) \otimes (F \circ \mcalK_1)\right),\\
&\dots\\
\mcalK_n'&=-\sum_{j=1}^{n-1} \mcalK'_j \circ \left(\sum_{i_1+\dots i_j = n} K_{i_1}\otimes K_{i_2} \otimes \dots \otimes K_{i_j} \right)\circ\left( (F\circ \mcalK_1) \otimes \dots \otimes (F\circ \mcalK_1) \right).
\end{split}
\end{equation}
\end{proof}

We observe that bounds on the radius of convergence, truncation error, and stability of the modified inverse Born series can be easily obtained using arguments similar to those made in Section \ref{sec:IB_o}.
Theorem \ref{thm_mod_inv_brn} can easily be applied to incorporate measurements from multiple values of $\alpha_0,$ provided the vertex potential $\eta$ is independent of the value of $\alpha_0.$ In particular, let $\Gamma= (E,V)$ be a graph and suppose we have measurements for $\alpha_0 = (\alpha_i)_1^m.$ Let $\Gamma' =\{\Gamma_1,\dots,\Gamma_m\}$ be the graph with vertices $V'= \{V_1,\dots,V_m\}$ and edges $E'=\{E_1,\dots,E_m\},$ consisting of $m$ copies of $\Gamma.$ Here the subscript denotes the copy of $E,$ $V,$ or $\Gamma$ to which we are referring. Let $\pi:V'\rightarrow V$ denote the projection map taking a vertex in $V_i$ or $\delta V_i$ to the corresponding vertex in $V$ or $\delta V,$ respectively. Finally, for a given vertex potential, $\eta,$ on $\Gamma$ let $\eta'$ denote the corresponding potential on $\Gamma'.$ Thus, for each vertex $v \in V',$
\begin{equation}
\eta'(v) = \eta(\pi(v)).
\end{equation}

Next we construct the following modified time-independent diffusion equation
\begin{equation}\label{eq:sys_mod_2}
\begin{split}
L_i u_i + \alpha_i [I+ D_{{\eta}'}] u_i -(A_i)_{V,\delta V}^Tv_i = \bf{0},\\
-(A_i)_{V,\delta V}u_i+Dv_i = \bf{g}_i,
\end{split}
\end{equation}
where $u_i$ and $v_i,$ $i=1,\dots,m,$ are supported on $V_i$ and $\delta V_i,$ respectively, and $L_i$ is the Laplacian corresponding to the $i$th subgraph. As before $D_{\eta'}$ denotes the diagonal matrix with entries given by $\eta'.$

Note that $\Gamma'$ consists of $m$ disconnected components, and hence the solution in one component is independent of the solution in another. If $W,U \subset V_i \times \delta V_i$ let $G_i^{W;U}$ denote the submatrix of $G_i$ consisting of the rows indexed by $W$ and the columns indexed by $U.$ It follows that the background Green's function for (\ref{eq:sys_mod_2}) is given by
\small
\begin{equation}
G_0=
\left(\begin{array}{cccc;{2pt/2pt}cccc}
G_1^{V_1;V_1}&&&&G_1^{V_1;\delta V_1}\\
&G_2^{V_2;V_2}&&&&G_2^{V_2;\delta V_2}\\
&&\ddots&&&&\ddots\\
&&&G_m^{V_m;V_m}&&&&G_m^{V_m;\delta V_m}\\
\hdashline[2pt/2pt]
G_1^{\delta V_1;V_1}&&&&G_1^{\delta V_1;\delta V_1}\\
&G_2^{\delta V_2;V_2}&&&&G_2^{\delta V_2;\delta V_2}\\
&&\ddots&&&&\ddots\\
&&&G_m^{\delta V_m;V_m}&&&&G_m^{\delta V_m;\delta V_m}\\
\end{array}
\right)
\end{equation}
\normalsize
Thus, if ${\bf u} = (u_1 ,\dots, u_m,v_1,\dots,v_m)^T$ solves (\ref{eq:sys_mod_2}) when $\tilde{\eta} \equiv 0,$ and 
$${\bf g} = (0,\dots,0,g_1,\dots,g_m)^T,$$
 then
\begin{equation}
{\bf u} = G_0 {\bf g}.
\end{equation}
Using this we can define the operators $K_1, \dots, K_n$ for (\ref{eq:sys_mod_2}), where we replace $G_0$ by 
\footnotesize
\begin{equation}
\begin{split}
G_0' = 
&\left(\begin{array}{cccc;{2pt/2pt}cccc}
\frac{\alpha_1}{\alpha_0}G_1^{V_1;V_1}&&&&\frac{\alpha_1}{\alpha_0}G_1^{V_1;\delta V_1}\\
&\frac{\alpha_2}{\alpha_0}G_2^{V_2;V_2}&&&&\frac{\alpha_2}{\alpha_0}G_2^{V_2;\delta V_2}\\
&&\ddots&&&&\ddots\\
&&&\frac{\alpha_m}{\alpha_0} G_m^{V_m;V_m}&&&&\frac{\alpha_m}{\alpha_0}G_m^{V_m;\delta V_m}\\
\hdashline[2pt/2pt]
\frac{\alpha_1}{\alpha_0}G_1^{\delta V_1;V_1}&&&&\frac{\alpha_1}{\alpha_0}G_1^{\delta V_1;\delta V_1}\\
&\frac{\alpha_2}{\alpha_0}G_2^{\delta V_2;V_2}&&&&\frac{\alpha_2}{\alpha_0}G_2^{\delta V_2;\delta V_2}\\
&&\ddots&&&&\ddots\\
&&&\frac{\alpha_m}{\alpha_0}G_m^{\delta V_m;V_m}&&&&\frac{\alpha_m}{\alpha_0}G_m^{\delta V_m;\delta V_m}\\
\end{array}\right),
\end{split}
\end{equation}
\normalsize
to account for the different $\alpha$ value in each component.

\begin{figure}[t]
  \centering
    \includegraphics[width=.5\textwidth]{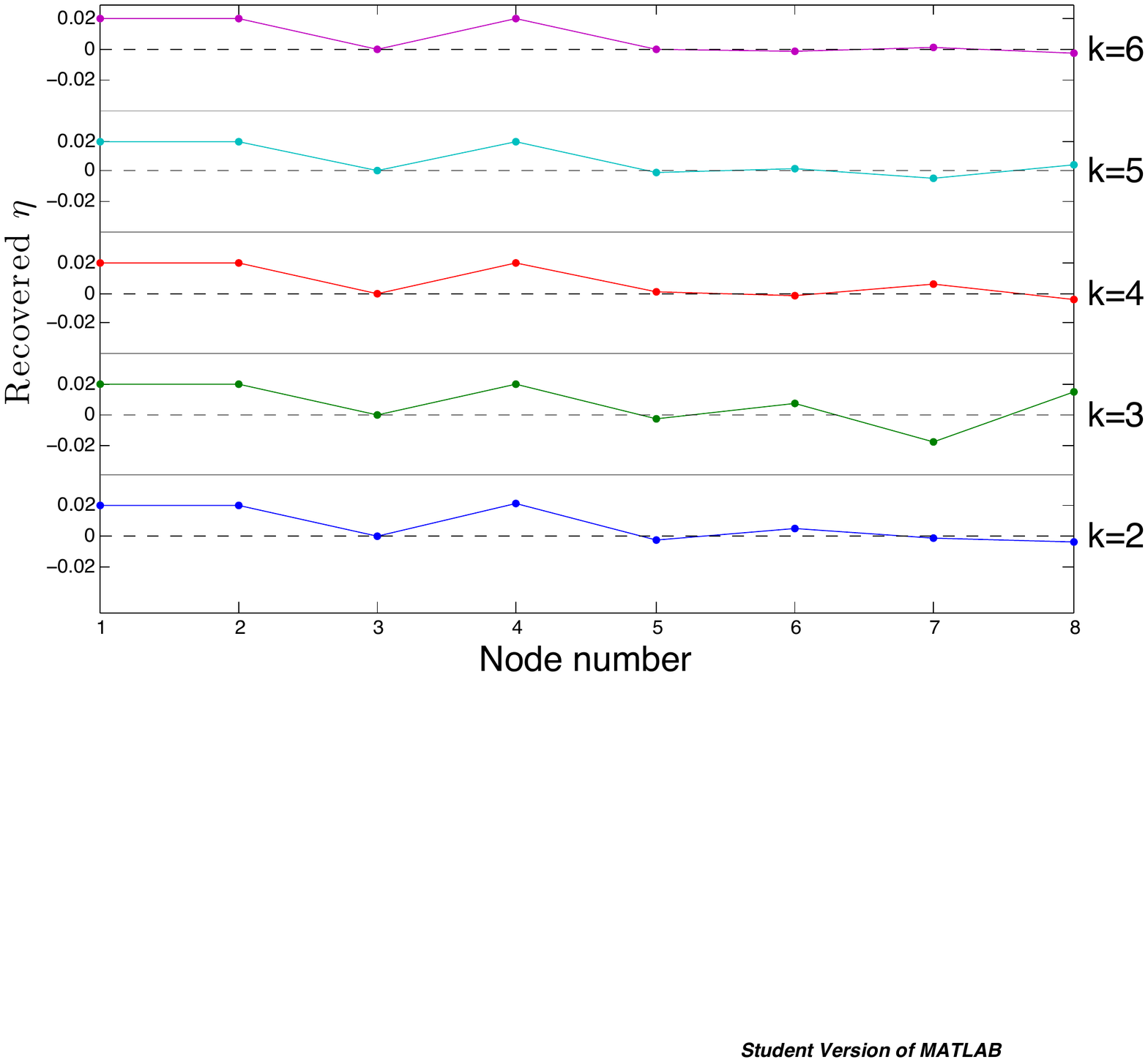}
      \caption{Multi-frequency inverse Born numerical experiments for $\alpha_i = 0.1+0.15\cdot i^\frac{1}{2},$ and $t = 0.1,$ where $\eta$ is equal to 0.02 on vertices 1, 2 and 4, and is identically zero on the remaining vertices. Here $k$ is the number of terms of the inverse Born series that were kept.} \label{fig:path_rec}
\end{figure}

We now enforce the condition that $\eta$ is identical on each copy of $\Gamma,$ and hence is independent of $\alpha.$ The map $F: \ell^p(V_1) \rightarrow \ell^p(V_1\times\dots\times V_m)$ in (\ref{eq:mod_F_def}) is defined by
\begin{equation}
F\{\eta\}(v) = \eta(\pi(v)).  
\end{equation}
Using this we form the modified inverse Born series operators in (\ref{eq:mod_inv_op}) and thus construct the modified inverse Born series. Provided that $(\mcalK_1 \circ F)$ is invertible and the measured data $\phi$ is sufficiently small, by Theorem \ref{thm_complex} the inverse Born series converges to the true (unique) value of $\eta.$ Since $\eta$ is the $\alpha$-independent absorption of the vertices in $\Gamma,$ we have constructed a reconstruction algorithm using data from multiple $\alpha_0$. To illustrate this algorithm we consider a path of length $10,$ noting that it cannot be imaged using the standard inverse Born series, that is with one value of $\alpha_0$. More generally, any graph containing a path of length greater than six in its interior, connected to the remainder of the graph only at its endpoints, the corresponding $K_1$ is not invertible. In fact, it can be shown that for such graphs that the absorption $\eta$ cannot be uniquely determined from the data $\phi.$ 

For our example, we choose one boundary vertex to act both as source and receiver and take $\alpha_i = 0.1+0.15 \cdot i^\frac{1}{2},$ $i=1,\dots, 25.$ Here $\eta$ is chosen to be a function supported on three randomly-chosen interior vertices, with a height of $0.02.$ The sums of the first few terms of the inverse Born series are shown in Figure \ref{fig:path_rec}.

\section{Acknowledgements}
This work was supported in part by the National Science Foundation grants DMS-1115574, DMS-1108969 and DMS-1619907 to JCS, and National Science Foundation grants CCF-1161233 and CIF-0910765 to ACG.

\newpage

\bibliography{bibfile}
\bibliographystyle{plain}

\end{document}